\newcommand*\circled[1]{\tikz[baseline=(char.base)]{
  \node[shape=circle,draw,inner sep=0pt] (char) {#1};}}
\newtheorem{theorem}{Theorem}
\newtheorem{cor}[theorem]{Corollary}
\newtheorem{prop}[theorem]{Proposition}
\newtheorem{ex}[theorem]{Example}
\newtheorem{lemma}[theorem]{Lemma}
\newtheorem{definition}[theorem]{Definition}
\newcommand{\casc}{{\rm casc\,}}
\newcommand{\mcq}{\mathcal{CQ}}
\newcommand{\desi}{{\rm desi\,}}
\newcommand{\NBar}{{\rm NBar\,}}
\newcommand{\hhat}{{\rm hat\,}}
\newcommand{\HHat}{{\rm Hat\,}}
\newcommand{\bbar}{{\rm bar\,}}
\newcommand{\BBar}{{\rm Bar\,}}
\newcommand{\redd}{{\rm red\,}}
\newcommand{\caplat}{{\rm cap\,}}
\newcommand{\cplat}{{\rm cplat\,}}
\newcommand{\ap}{{\rm ap\,}}
\newcommand{\asc}{{\rm asc\,}}
\newcommand{\fix}{{\rm fix\,}}
\newcommand{\cyc}{{\rm cyc\,}}
\newcommand{\rlmin}{{\rm rlmin\,}}
\newcommand{\RLMIN}{{\rm RLMIN\,}}
\newcommand{\exc}{{\rm exc\,}}
\newcommand{\des}{{\rm des\,}}
\newcommand{\el}{{\rm el\,}}
\newcommand{\ol}{{\rm ol\,}}
\newcommand{\mdq}{\mathcal{DQ}}
\newcommand{\mq}{\mathcal{Q}}
\newcommand{\mbn}{{\mathcal B}_n}
\newcommand{\mdqn}{\mathcal{DQ}_n}
\newcommand{\mdn}{\mathcal{D}}
\newcommand{\mqn}{\mathcal{Q}_n}
\newcommand{\mqnn}{\mathcal{Q}_{n+1}}
\newcommand{\msn}{\mathfrak{S}_n}
\newcommand{\msss}{\mathfrak{S}}
\newcommand{\z}{ \mathbb{Z}}
\newcommand{\msp}{{\mathcal P}}
\newcommand{\mbp}{{\mathcal B}}
\newcommand{\I}{{\rm I}}
\newcommand{\mmn}{\mathcal{M}_{2n}}
\newcommand{\m}{{\rm M}}
\newcommand{\mm}{\mathcal{M}}
\newcommand{\PM}{\mathcal{PM}}
\newcommand{\BM}{\mathcal{BM}}
\newcommand{\Eulerian}[2]{\genfrac{<}{>}{0pt}{}{#1}{#2}}
\newcommand{\arxiv}[1]{\href{http://arxiv.org/abs/#1}{\texttt{arXiv:#1}}}
\title[Eulerian polynomials and Stirling permutations of the second kind]{Eulerian polynomials, perfect matchings and Stirling permutations of the second kind}
\author[S.-M.~Ma]{Shi-Mei Ma}
\address{School of Mathematics and Statistics,
        Northeastern University at Qinhuangdao,
         Hebei 066004, P.R. China}
\email{shimeimapapers@163.com (S.-M. Ma)}
\author[Y.-N. Yeh]{Yeong-Nan Yeh}
\address{Institute of Mathematics,
        Academia Sinica, Taipei, Taiwan}
\email{mayeh@math.sinica.edu.tw (Y.-N. Yeh)}
\subjclass[2010]{Primary 05A15; Secondary 05A19}
\begin{document}

\maketitle
\begin{abstract}
In this paper, we first present combinatorial proofs of a kind of expansions of the Eulerian polynomials of types $A$ and $B$, and
then we introduce Stirling permutations of the second kind. In particular, we count Stirling permutations of the second kind by their cycle ascent plateaus, fixed points and cycles.
\bigskip\\
{\sl Keywords:} Eulerian polynomials; Perfect matchings; Stirling permutations of the second kind; Stirling derangements
\end{abstract}
\date{\today}
\section{Introduction}
Let $\msn$ be the symmetric group on the set $[n]=\{1,2,\ldots,n\}$ and
let $\pi=\pi(1)\pi(2)\cdots\pi(n)\in\msn$.
Denote by $\mbn$ the hyperoctahedral group of rank $n$. Elements $\pi$ of $\mbn$ are signed permutations of the set $\pm[n]$ such that $\pi(-i)=-\pi(i)$ for all $i$, where $\pm[n]=\{\pm1,\pm2,\ldots,\pm n\}$.
Let $\#S$ denote the cardinality of a set $S$.
We define
\begin{equation*}
\begin{split}
\des_A(\pi)&:=\#\{i\in\{1,2,\ldots,n-1\}|\pi(i)>\pi({i+1})\},\\
\des_B(\pi)&:=\#\{i\in\{0,1,2,\ldots,n-1\}|\pi(i)>\pi({i+1})\},
\end{split}
\end{equation*}
where $\pi(0)=0$.
The Eulerian polynomials of types $A$ and $B$  are
respectively defined by
\begin{equation*}
\begin{split}
A_n(x)&=\sum_{\pi\in\msn}x^{\des_A(\pi)},\\
B_n(x)&=\sum_{\pi\in\mbn}x^{\des_B(\pi)}.
\end{split}
\end{equation*}
There is a larger literature devoted to $A_n(x)$ and $B_n(x)$ (see, e.g.,~\cite{Bre94,Bre00,Dilks09,Foata09,Lin15,Savage1201} and references therein).
Let $s=(s_1,s_2,\ldots)$ be a sequence of positive integers. Let
$$I_n^{(s)}=\left\{ (e_1,e_2,\ldots,e_n)\in \z^n|~0\leq e_i< s_i\right\},$$ which known as the set of $s$-inversion sequences.
The number of {\it ascents} of an $s$-inversion sequence $e=(e_1,e_2,\ldots,e_n)\in I_n^{(s)}$ is defined by
$$\asc(e)=\#\left\{i\in [n-1]:\frac{e_i}{s_i}<\frac{e_{i+1}}{s_{i+1}}\right\}\cup\{0: \textrm{if $e_1> 0$}\}.$$
Let $E_n^s(x)=\sum_{e\in I_n^s}x^{\asc(e)}$.
Following~\cite{Savage1201}, we have
\begin{align*}
A_n(x)&=E_n^{(1,2,\ldots,n)}(x), \\
B_n(x)&=E_n^{(2,4,\ldots,2n)}(x).
\end{align*}

Let $M_n(x)$ be a sequence of polynomials defined by
\begin{equation}\label{N2xt02}
M(x,z)=\sum_{n\geq 0}M_n(x)\frac{z^n}{n!}=\sqrt{\frac{x-1}{x-e^{2z(x-1)}}}.
\end{equation}
Combining~\eqref{N2xt02} and an explicit formula of the Ehrhart polynomial of the $s$-lecture hall polytope, Savage and Viswanathan~\cite{Savage1202} proved that
$M_n(x)=E_n^{(1,3,\ldots,2n-1)}(x)$.

A {\it perfect matching} of $[2n]$ is a partition of $[2n]$ into $n$ blocks of size $2$. Denote by $N({n,k})$ the number of perfect matchings of $[2n]$ with the restriction that only $k$ matching pairs have even larger entries.
The numbers $N(n,k)$ satisfy the recurrence relation
\begin{equation*}
N(n+1,k)=2kN(n,k)+(2n-2k+3)N(n,k-1)
\end{equation*}
for $n,k\geq 1$, where $N(1,1)=1$ and $N(1,k)=0$ for $k\geq 2$ or $k\leq 0$ (see~\cite[Proposition~1]{MaYeh}).
Let $N_n(x)=\sum_{k=1}^nN({n,k})x^k$. The first few of the polynomials $N_n(x)$ are
$$N_0(x)=1,N_1(x)=x, N_2(x)=2x+x^2, N_3(x)=4x+10x^2+x^3.$$
The exponential generating function for $N_n(x)$ is given as follows (see~\cite[Eq.~(25)]{Ma13}):
\begin{equation}\label{N2xt}
N(x,z)=\sum_{n\geq 0}N_n(x)\frac{z^n}{n!}=\sqrt{\frac{1-x}{1-xe^{2z(1-x)}}}.
\end{equation}
Combining~\eqref{N2xt02} and~\eqref{N2xt}, we get $M_n(x)=x^{n}N_n(\frac{1}{x})$ for $n\geq 0$.

Context-free grammar was introduced by Chen~\cite{Chen93} and it is a powerful tool for studying exponential structures in combinatorics.
We refer the reader to~\cite{Chen121,Chen14,Dumont96,Ma-EuJC} for further information. In particular, using~\cite[Theorem~10]{Ma-EuJC}, it is easy to present a grammatical proof of the following result.
\begin{prop}\label{prop01}
For $n\geq 0$, we have
\begin{equation}\label{NnxAnx}
2^nxA_n(x)=\sum_{k=0}^n\binom{n}{k}N_k(x)N_{n-k}(x),
\end{equation}
\begin{equation}\label{NnxBnx}
B_n(x)=\sum_{k=0}^n\binom{n}{k}N_k(x)M_{n-k}(x).
\end{equation}
\end{prop}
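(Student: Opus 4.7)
My plan is to verify both identities by translating them into functional identities among exponential generating functions. The observation driving the argument is that for any two sequences of polynomials with EGFs $P(x,z)$ and $Q(x,z)$, the binomial convolution $\sum_{k=0}^n\binom{n}{k}P_k(x)Q_{n-k}(x)$ equals $n!$ times the coefficient of $z^n$ in $P(x,z)Q(x,z)$. Thus the right-hand side of \eqref{NnxAnx} is encoded by $N(x,z)^2$ while the right-hand side of \eqref{NnxBnx} is encoded by $N(x,z)M(x,z)$, and the task reduces to identifying these two generating functions with the expected closed forms.

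For \eqref{NnxAnx}, I would start from the classical EGF $A(x,z)=(x-1)/(x-e^{z(x-1)})$. Substituting $2z$ for $z$ and multiplying numerator and denominator by $e^{2z(1-x)}$ puts $xA(x,2z)$ over the common denominator $xe^{2z(1-x)}-1$ that also appears, after clearing a sign, in $N(x,z)^2=(1-x)/(1-xe^{2z(1-x)})$. A one-line subtraction then yields $xA(x,2z)-N(x,z)^2=x-1$, which is a constant in $z$, so extracting the coefficient of $z^n/n!$ for $n\ge 1$ recovers \eqref{NnxAnx}.

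For \eqref{NnxBnx}, I would use the classical EGF $B(x,z)=(1-x)e^{z(1-x)}/(1-xe^{2z(1-x)})$. The algebraic identity $x-e^{2z(x-1)}=e^{2z(x-1)}\bigl(xe^{2z(1-x)}-1\bigr)$ rewrites $M(x,z)^2$ from \eqref{N2xt02} over the same denominator as $N(x,z)^2$, whereupon the product $N(x,z)^2M(x,z)^2$ collapses to the perfect square $(x-1)^2e^{2z(1-x)}/(xe^{2z(1-x)}-1)^2$. Taking square roots, with the branch pinned down by the initial condition $N(x,0)M(x,0)=1=B(x,0)$, gives $N(x,z)M(x,z)=B(x,z)$, and reading off the coefficient of $z^n/n!$ finishes the argument.

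The main obstacle I expect is bookkeeping rather than substance: one must track signs and branch choices carefully when combining the square roots defining $N(x,z)$ and $M(x,z)$, and handle the $n=0$ boundary in \eqref{NnxAnx} by absorbing the discrepancy into the constant $x-1$. The paper instead advertises a grammatical proof via \cite[Theorem~10]{Ma-EuJC}, which would repackage exactly the same cancellations as symbolic substitution rules and bypass the branch discussion entirely; but the EGF route above stays within identities already recorded in the excerpt and reduces the whole proposition to elementary algebra.
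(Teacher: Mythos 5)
Your argument is correct, but it is a genuinely different proof from the one the paper gives. I checked the algebra: with $F(x,z)=(x-1)/(x-e^{z(x-1)})$ one indeed gets $xF(x,2z)-N^2(x,z)=x-1$, and $N^2(x,z)M^2(x,z)=(x-1)^2e^{2z(1-x)}/(xe^{2z(1-x)}-1)^2$, whose square root with constant term $1$ is the classical type-$B$ Eulerian generating function, so both convolutions follow by extracting coefficients. The paper, however, proves Proposition~\ref{prop01} bijectively in Section~\ref{Section02}: for \eqref{NnxAnx} it constructs a set $\msp_n$ of $2^nn!$ decorated permutations satisfying $\sum_{w\in\msp_n}x^{\asc(w)}=2^nxA_n(x)$ and an explicit recursive bijection $\Phi$ from the $w$ with $k$ hatted entries to triples $(S_1,S_2,I_{n,k})$ with $S_1\in\mm_{2k}$, $S_2\in\mm_{2n-2k}$, carrying $\asc(w)$ to $\el(S_1)+\el(S_2)$; for \eqref{NnxBnx} it runs the analogous construction $\Psi$ on signed permutations via the bar-block statistic, carrying $\des_B(\pi)$ to $\el(T_1)+\ol(T_2)$. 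Your route is much shorter and purely algebraic --- the paper itself already records $N^2(x,z)=A(x,2z)$ as an equivalent form of \eqref{NnxAnx}, so the type-$A$ case is immediate from the stated closed forms --- but it yields no combinatorial correspondence, which is the paper's announced goal, and it imports one fact not stated in the paper, namely $\sum_{n\ge0}B_n(x)z^n/n!=(1-x)e^{z(1-x)}/(1-xe^{2z(1-x)})$ (due to Brenti), which you should cite or verify. Your remark about $n=0$ is well taken: the left side of \eqref{NnxAnx} is $x$ there while the right side is $1$, so the identity really holds for $n\ge1$, or for $n\ge 0$ only under the convention $xA_0(x)=1$ implicit in the definition of $A(x,z)$.
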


Recall that the exponential generating function for $xA_n(x)$ is
\begin{equation*}\label{Axz}
A(x,z)=1+\sum_{n\geq 1}xA_n(x)\frac{t^n}{n!}=\frac{1-x}{1-xe^{z(1-x)}}.
\end{equation*}
An equivalent formula of~\eqref{NnxAnx} is given as follows:
\begin{equation}\label{N2xzAx2z}
N^2(x,z)=A(x,2z).
\end{equation}

One purpose of this paper is to study the correspondence between permutations and pairs of perfect matchings.
Motivated by~\eqref{N2xzAx2z}, another purpose of this paper is to explore some cycle structure related to $N(x,z)$ or $M(x,z)$.
This paper is organized as follows. In Section~\ref{Section02}, we present a combinatorial proof of Proposition~\ref{prop01}.
In Section~\ref{Section03}, we introduce the Stirling permutations of the second kind.
In Section~\ref{Section04}, we count Stirling permutations of the second kind by their cycle ascent plateaus, fixed points and cycles.
\section{A combinatorial proof of Proposition~\ref{prop01}}\label{Section02}
Let $\mmn$ be the set of perfect matchings of $[2n]$, and let $\m\in\mmn$.
The \emph{standard form} of $\m$ is a list of blocks $\{(i_1,j_1),(i_2,j_2),\ldots,(i_n,j_n)\}$ such that
$i_r<j_r$ for all $1\leq r\leq n$ and $1=i_1<i_2<\cdots <i_n$.
In the following discussion we always write $\m$ in standard form.
Let $\el(\m)$ (resp. $\ol(\m)$) be the number of blocks of $\m$ with even larger (resp. odd larger) entries.
Therefore, we have
$$N_n(x)=\sum_{\m\in\mmn}x^{\el(\m)},$$
$$M_n(x)=\sum_{\m\in\mmn}x^{\ol(\m)}.$$
For convenience, we call $(i,j)$ a \emph{marked block} (resp.~an \emph{unmarked block}) if $j$ is even (resp. odd) and
large than $i$.

\subsection{Permutations and pairs of perfect matchings}
\hspace*{\parindent}

Let the entry $\pi(i)$ be called a {\it descent} (resp. an {\it ascent}) of $\pi$ if $\pi(i)>\pi(i+1)$ (resp. $\pi(i)<\pi(i+1)$).
By using the reverse map, it is evident that the ascent and descent statistics are equidistributed.
Let $\asc(\pi)$ be the number of ascents of $\pi$. Hence
\begin{equation}\label{Anx-asc}
A_n(x)=\sum_{\pi\in\msn}x^{\asc(\pi)}.
\end{equation}
Throughout this subsection, we shall always use~\eqref{Anx-asc} as the definition of $A_n(x)$.

We now constructively define a set of decorated permutations on $[n]$ with some entries of permutations decorated with hats and circles, denoted by $\msp_n$. Let $w=w_1w_2\cdots w_n\in \msp_n$. We say that $w_i$ with a hat (resp. circle) if
$w_i=\tiny{\widehat{{k}}}$ or $w_i={\circled{$\widehat{{k}}$}}$ (resp. $w_i={\circled{$k$}}$ or $w_i={\circled{$\widehat{{k}}$}}$) for some $k\in [n]$.
Start with $\msp_1=\{1,\widehat{{1}}\}$.
Suppose we have get $\msp_{n-1}$, where $n\geq 2$.
Given $v=v_1v_2\cdots v_{n-1}\in\msp_{n-1}$. We now construct
entries of $\msp_{n}$ by inserting $n,{\circled{n}},\widehat{{n}}$ or $\circled{$\widehat{n}$}$ into $v$ according the following rules:
\begin{enumerate}
\item [($r_1$)] We can only put $n$ or $\widehat{n}$ at the end of $v$;
\item [($r_2$)] For $1\leq i\leq n-1$, if $v_i$ with no bar, then we can only put $n$ or ${\circled{n}}$ immediately before $v_i$;
if $v_i$ with a bar, then we can only put
$\widehat{n}$ or $\circled{$\widehat{n}$}$ immediately before $v_i$. In other words, if $v_i$ with a hat (resp. with no hat), then we can only insert $n$ with a hat (resp. with no hat) immediately before $v_i$.
\end{enumerate}

It is clear that there are $2n$ elements in $\msp_{n}$ that can be generated from any $v\in\msp_{n-1}$.
By induction, we obtain $|\msp_n|=2n|\msp_{n-1}|=2^nn!$.
Let $\varphi(w)=\varphi(w_1)\varphi(w_2)\cdots\varphi(w_n)$ be a permutation of $\msn$ obtained from $w\in\msp_n$ by deleting the hats and circles of all $w_i$.
For example, $\varphi(\widehat{3}~\widehat{1}\circled{4}~2)=3142$.
Let $\msp_{n}(\pi)=\{w\in\msp_n: \varphi(w)=\pi\}$. Let $k\ell$ be a consecutive subword of $\pi\in\msn$.
By using the above rules, we see that
if $k<\ell$, then $k\ell$ can be decorated as follows:
$$k\ell,k\widehat{\ell},\widehat{k}\ell,\widehat{k}~\widehat{\ell};$$
If $k>\ell$, then $k\ell$ can be decorated as
$k\ell,\circled{k}\ell,\widehat{k}~\widehat{\ell},\circled{$\widehat{k}$}~\widehat{\ell}$.
Therefore, $|\msp_n(\pi)|=2^n$ for any $\pi\in\msn$. It should be noted that $k\widehat{\ell}$ or $\widehat{k}\ell$ is a consecutive subword of $w\in\msp_n$ if and only if $k<\ell$.
Let the entry $w_i$ be called an {\it ascent} (resp. a {\it descent}) of $w$ if
$\varphi(w_i)<\varphi(w_{i+1})$ ($\varphi(w_i)>\varphi(w_{i+1})$). Also a conventional ascent is counted at the beginning of $w$.
That is, we identify a decorated permutation $w=w_1\cdots w_n$ with the word $w_0w_1\cdots w_n$, where $w_0=0$.
Let $\asc(w)$ be the number of ascents of $w$.
Therefore, we obtain
$$2^nxA_n(x)=\sum_{w\in\msp_n}x^{\asc(w)}.$$

\begin{ex}
The following decorated permutations are generated from $\widehat{3}~\widehat{1}4~2$:
\begin{align*}
&\widehat{3}~\widehat{1}425,~\widehat{3}~\widehat{1}4~2~\widehat{5}, ~\widehat{3}~\widehat{1}4~5~2, ~\widehat{3}~\widehat{1}4~\circled{5}~2,\widehat{3}~\widehat{1}542,\\ &~\widehat{3}~\widehat{1}~\circled{5}42,~\widehat{3}~\widehat{5}~\widehat{1}42,~\widehat{3}~\circled{$\widehat{5}$}~\widehat{1}~4~2,
\widehat{5}~\widehat{3}~\widehat{1}42,~\circled{$\widehat{5}$}~\widehat{3}~\widehat{1}42.
\end{align*}
\end{ex}
\begin{ex}
We have $\msp_2=\{12,1\widehat{2},\widehat{1}2,\widehat{1}~\widehat{2},21,\circled{2}1,\widehat{2}~\widehat{1},\circled{$\widehat{2}$}~\widehat{1}\}$.
\end{ex}

Let $\I_{n,k}$ be the set of subsets of $[n]$ with cardinality $k$. Let $\HHat(w)$ be the set of entries of $w$ with hats and let $\hhat(w)=\#\HHat(w)$. Let $\varphi(\HHat(w))$ be a subset of $[n]$ obtained from $\HHat(w)$ by deleting all hats and circles of all entries of $\HHat(w)$.
For example, if $w=\circled{$\widehat{5}$}~\widehat{3}~\widehat{1}42$, then $\HHat(w)=\{\widehat{1},\widehat{3},\circled{$\widehat{5}$}\}$ and $\varphi(\HHat(w))=\{1,3,5\}$.
We define
\begin{align*}
\PM_{n,k}&=\{(S_1,S_2,I_{n,k}): S_1\in \mm_{2k},~S_2\in\mm_{2n-2k},~I_{n,k}\in \I_{n,k}\},\\
\msp_{n,k}&=\{w\in\msp_n: \hhat(w)=k\}.
\end{align*}
In this subsection, we always assume that the weight of $w\in\msp_{n,k}$ is $x^{\asc(w)}$ and that of the pair of matchings $(S_1,S_2)$ is $x^{\el(S_1)+\el(S_2)}$.

Now we start to construct a bijection, denoted by $\Phi$, between $\msp_{n,k}$ and
$\PM_{n,k}$. When $n=1$,
set $\Phi(1)=(\emptyset,(12),\emptyset)$ and $\Phi(\widehat{1})=((12),\emptyset,\{1\})$.
This gives a bijection between $\msp_{1,k}$ and $\PM_{1,k}$.
When $n=2$, the bijection between $\msp_{2,k}$ and $\PM_{2,k}$ is given as follows:
\begin{align*}
\Phi(12)&=(\emptyset,(12)(34),\emptyset), ~\Phi(21)=(\emptyset,(13)(24),\emptyset)\\
\Phi(\circled{2}1)&=(\emptyset,(14)(23),\emptyset),~\Phi(\widehat{1}{2})=((12),(12),\{1\}),\\
\Phi(1\widehat{2})&=((12),(12),\{2\}),~\Phi(\widehat{1}~\widehat{2})=((12)(34),\emptyset,\{1,2\}),\\
\Phi(\widehat{2}~\overline{1})&=((13)(24),\emptyset,\{1,2\}),~\Phi(\circled{$\widehat{2}$}~\widehat{1})=((14)(23),\emptyset,\{1,2\}).
\end{align*}

Suppose $\Phi$ is a bijection between $\msp_{m-1,k}$ and $\PM_{m-1,k}$ for all $k$, where $m\geq 3$.
Assume that $w=w_1w_2\cdots w_{m-1}\in\msp_{m-1,k}$, $\asc(w)=i+j$ and $\HHat(w)=\{w_{i_1},w_{i_2},\ldots,w_{i_k}\}$.
Let $\Phi(w)=(S_1,S_2,I_{m-1,k})$, where $S_1\in\mm_{2k},~S_2\in\mm_{2m-2k-2},~I_{m-1,k}=\varphi(\HHat(w)),~\el(S_1)=i,~\el(S_2)=j$.

Consider the case $n=m$.
Let $w'$ be a decorated permutation generated from $w$.
We first distinguish two cases:
If $w'=wm$, then let $\Phi(w')=(S_1,S_2(2m-2k-1,2m-2k),I_{m,k})$, where $I_{m,k}=\varphi(\HHat(w))$;
If $w'=w\widehat{m}$, then let $\Phi(w')=(S_1(2k+1,2k+2),S_2,I_{m,k+1})$, where $I_{m,k+1}=\varphi(\HHat(w))\cup\{m\}$.

Now let $\ell_1\ell_2$ be a consecutive subword of $w$.
Firstly, suppose that $\ell_2$ with no hat. We say $\ell_2$ is a {\it unhat-ascent-top} (resp. {\it unhat-descent-bottom}) if $\varphi(\ell_1)<\varphi(\ell_2)$ (resp. $\varphi(\ell_1)>\varphi(\ell_2)$).
Consider the following two cases:
\begin{enumerate}
  \item [\rm ($c_1$)] If $w'=\cdots\ell_1 m\ell_2$ (resp. $w'=\cdots\ell_1\circled{m} \ell_2\cdots$) and $\ell_2$ is the $p$th unhat-ascent-top of $w$, then let $\Phi(w')=(S_1,S'_2,I_{m,k})$, where $I_{m,k}=\varphi(\HHat(w))$ and $S'_2$ is obtained from $S_2$ by
  replacing $p$th marked block $(a,b)$ by two blocks $(a,2m-2k-1),(b,2m-2k)$ (resp. $(a,2m-2k),(b,2m-2k-1)$).
  \item [\rm ($c_2$)] If $w'=\cdots\ell_1m \ell_2\cdots$ (resp. $w'=\cdots\ell_1\circled{m} \ell_2\cdots$) and $\ell_2$ is the $p$th unhat-descent-bottom of $w$, then let $\Phi(w')=(S_1,S'_2,I_{m,k})$, where $I_{m,k}=\varphi(\HHat(w))$ and $S'_2$ is obtained from $S_2$ by replacing $p$th unmarked block $(a,b)$ by two blocks $(a,2m-2k-1),(b,2m-2k)$ (resp. $(a,2m-2k),(b,2m-2k-1)$).
\end{enumerate}

Secondly, suppose that $\ell_2$ with a hat. We say $\ell_2$ is a {\it hat-ascent-top} (resp. {\it hat-descent-bottom}) if $\varphi(\ell_1)<\varphi(\ell_2)$ (resp. $\varphi(\ell_1)>\varphi(\ell_2)$).
Consider the following two cases:
\begin{enumerate}
\item [\rm ($c_1$)] If $w'=\cdots\ell\widehat{m}~{\ell_2}\cdots$ (resp. $w'=\cdots\ell_1\circled{$\widehat{m}$}~{\ell_2}\cdots$) and $\ell_2$ is
the $p$th hat-ascent-top of $w$, then let $\Phi_1(w')=(S'_1,S_2,I_{m,{k+1}})$, where $I_{m,{k+1}}=\varphi(\HHat(w))\cup\{m\}$ and $S'_1$ is obtained from $S_1$ by replacing the $p$th marked block $(a,b)$ by two blocks $(a,2k+1),(b,2k+2)$ (resp. $(a,2k+2),(b,2k+1)$).
\item [\rm ($c_2$)] If $w'=\cdots\ell\widehat{m}~{\ell_2}\cdots$ (resp. $w'=\cdots\ell_1\circled{$\widehat{m}$}~{\ell_2}\cdots$) and $\ell_2$ is
the $p$th hat-descent-bottom of $w$,
then let $\Phi(w')=(S'_1,S_2,I_{m,{k+1}})$, where $I_{m,{k+1}}=\varphi(\HHat(w))\cup\{m\}$ and $S_1'$ is obtained from $S_1$ by replacing the $p$th
unmarked block $(a,b)$ by two blocks $(a,2k+1),(b,2k+2)$ (resp. $(a,2k+2),(b,2k+1)$).
\end{enumerate}

After the above step, we write the obtained perfect matchings in standard form. Suppose that $w\in\msp_{n,k}$ and $\Phi(w)=(S_1,S_2,I_{n,k})$.
Then $\asc(w)=i+j$ if and only if $\el(S_1)+\el(S_2)=i+j$.
By induction, we see that $\Phi$ is the desired bijection between $\PM_{n,k}$ to $\msp_{n,k}$ for all $k$,
which also gives a constructive proof of~\eqref{NnxAnx}.

\begin{ex}
Let $w=\widehat{3}~\widehat{1}42\circled{$\widehat{6}$}\widehat{5}\in \msp_{6,4}$.
The correspondence between $w$ and $\Phi(w)$ is built up as follows:
\begin{align*}
\widehat{1}&\Leftrightarrow ((12),\emptyset,\{1\});\\
\widehat{1}2&\Leftrightarrow ((12),(12),\{1\});\\
\widehat{3}~\widehat{1}2&\Leftrightarrow ((13)(24),(12),\{1,3\});\\
\widehat{3}~\widehat{1}42&\Leftrightarrow ((13)(24),(13)(24),\{1,3\});\\
\widehat{3}~\widehat{1}42\widehat{5}&\Leftrightarrow ((13)(24)(5,6),(13)(24),\{1,3,5\});\\
\widehat{3}~\widehat{1}42\circled{$\widehat{6}$}\widehat{5}&\Leftrightarrow ((13)(24)(5,8)(6,7),(13)(24),\{1,3,5,6\}).
\end{align*}
\end{ex}

\subsection{Signed permutations and pairs of perfect matchings}
\hspace*{\parindent}

In this subsection, we shall write signed permutations of $\mbn$ as $\pi=\pi(0)\pi(1)\pi(2)\cdots\pi(n)$, where
some elements are associated with the minus sign and $\pi(0)=0$. As usual, we denote by $\overline{i}$ the negative element $-i$.
For $\pi\in\mbn$, let
$\RLMIN(\pi)=\{\pi(i): |\pi(i)|<|\pi(j)| ~\textrm{for all $j>i$}\}$.
For example, $\RLMIN(\overline{3}~\overline{1}42\overline{6}7\overline{5})=\{\overline{1},2,\overline{5}\}$.
Let $\rlmin(\pi)=\#\RLMIN(\pi)$. It is clear that if $\pi\in\msn$, then $\rlmin(\pi)$ is the number of {\it right-to-left minima} of $\pi$.
Thus
$$\sum_{\pi\in\mbn}x^{\rlmin(\pi)}=2^n\sum_{\pi\in\msn}x^{\rlmin(\pi)} =2^nx(x+1)(x+2)\cdots(x+n-1)\quad\textrm{for $n\ge 1$}.$$

\begin{definition}
A block of $\pi$ is a maximal subsequence of consecutive elements of $\pi$ ending with $\pi(i)\in\RLMIN(\pi)$ and not contain any other element of $\RLMIN(\pi)$.
\end{definition}
It is clear that any $\pi$ has a unique decomposition as a sequence of its blocks.
If $\rlmin(\pi)=k$, then we write $\pi\mapsto B_1B_2\cdots B_k$,
where $B_i$ is $i$th block of $\pi$.
A {\it bar-block} (resp.~{\it unbar-block}) is a block ending with a negative (resp. positive) element.
Let $\BBar(\pi)$ be the union of elements of bar-blocks of $\pi$ and let $\bbar(\pi)=\#\BBar(\pi)$.
We define a map $\theta$ by
$$\theta(\BBar(\pi))=\{|\pi(i)|: \pi(i)\in \BBar(\pi)\}.$$
Set $\NBar(\pi)=[n]/\BBar(\pi)$.
For example, if $\pi=\overline{3}~\overline{1}42\overline{6}7\overline{5}$, then
$\pi\mapsto[\overline{3}~\overline{1}][42][\overline{6}7\overline{5}]$, $[\overline{3}~\overline{1}]$ and $[\overline{6}7\overline{5}]$ are bar-blocks of $\pi$, $\BBar(\pi)=\{\overline{6},\overline{5},\overline{3},\overline{1},7\},\bbar(\pi)=5$, $\theta(\BBar(\pi))=\{1,3,5,6,7\}$.
and $\NBar(\pi)=\{2,4\}$.

Let
${\mbp}_{n,k}=\{\pi\in\mbn: \bbar(\pi)=k\}$ and let
$${\BM}_{n,k}=\{(T_1,T_2,I_{n,k}): T_1\in \mm_{2k},~T_2\in\mm_{2n-2k},~I_{n,k}\in \I_{n,k}\},$$
where $\I_{n,k}$ is the set of subsets of $[n]$ with cardinality $k$.
In this subsection, we always assume that the weight of $\pi\in\mbp_{n,k}$ is $x^{\des_B(\pi)}$ and that of the pair of matchings $(T_1,T_2)$ is $x^{\el(T_1)+\ol(T_2)}$.

Along the same lines as the proof of~\eqref{NnxAnx}, we start to construct a bijection, denoted by $\Psi$, between ${\mbp}_{n,k}$ and
${\BM}_{n,k}$. When $n=1$,
set $\Psi(1)=(\emptyset,(12),\emptyset)$ and $\Psi(\overline{1})=((12),\emptyset,\{1\})$.
This gives a bijection between $\mbp_{1,k}$ and $\BM_{1,k}$.
When $n=2$, the bijection $\Psi$ between ${\mbp}_{2,k}$ and ${\BM}_{2,k}$ is given as follows:
\begin{align*}
\Psi(12)&=(\emptyset,(12)(34),\emptyset),~\Psi(21)=(\emptyset,(13)(24),\emptyset)\\
\Psi(\overline{2}1)&=(\emptyset,(14)(23),\emptyset),~\Psi(\overline{1}{2})=((12),(12),\{1\}),\\
\Psi(1\overline{2})&=((12),(12),\{2\}),~\Psi(\overline{1}~\overline{2})=((12)(34),\emptyset,\{1,2\}),\\
\Psi(2\overline{1})&=((13)(24),\emptyset,\{1,2\}),~\Psi(\overline{2}~\overline{1})=((14)(23),\emptyset,\{1,2\}).
\end{align*}

Suppose $\Psi$ is a bijection between $\mbp_{m-1,k}$ and $\BM_{m-1,k}$ for all $k$, where $m\geq 3$.
Assume that $\pi=\pi(1)\pi(2)\cdots \pi(m-1)\in{\mbp}_{m-1,k}$, $\des_B(\pi)=i+j$ and $\BBar(\pi)=\{\pi(i_1),\pi(i_2)\ldots,\pi(i_k)\}$.
Let $\Psi(\pi)=(T_1,T_2,I_{m-1,k})$, where $T_1\in\mm_{2k},~T_2\in\mm_{2m-2k-2},~I_{m-1,k}=\theta(\BBar(\pi)),~\el(T_1)=i,~\ol(T_2)=j$.
Consider the case $n=m$. Let $\pi'$ be obtained from $\pi$ by inserting
the entry $m$ (resp. $\overline{m}$) into $\pi$.
We first distinguish two cases: If $\pi'=\pi m$, then
let $$\Psi(\pi')=(T_1,T_2(2m-2k-1,2m-2k),I_{m,k}),$$ where $I_{m,k}=\theta(\BBar(\pi))$;
If ${\pi}'=\pi \overline{m}$, then let $\Psi({\pi}')=(T_1(2k+1,2k+2),T_2,I_{m,k+1})$, where $I_{m,k+1}=\theta(\BBar(\pi))\cup\{m\}$.

For $0\leq i\leq m-2$, consider the consecutive subword $\pi(i)\pi(i+1)$ of $\pi$.
Firstly, suppose that $\pi(i+1)\in \NBar(\pi)$. We say $\pi(i+1)$ is a {\it unbar-ascent-top} (resp. {\it unbar-descent-bottom}) if $\pi(i)<\pi(i+1)$ (resp. $\pi(i)>\pi(i+1)$).
Consider the following two cases:
\begin{enumerate}
  \item [\rm ($c_1$)] If $\pi'=\cdots\pi(i)m\pi(i+1)\cdots$ (resp. $\pi'=\cdots\pi(i)\overline{m}\pi(i+1)\cdots$) and $\pi(i+1)$ is the $p$th unbar-ascent-top of $\pi$, then let $\Psi(\pi')=(T_1,T_2',I_{m,k})$, where $T_2'$ is obtained from $T_2$ by replacing the $p$th marked block $(a,b)$ by two blocks $(a,2m-2k-1),(b,2m-2k)$ (resp. $(a,2m-2k),(b,2m-2k-1)$) and $I_{m,k}=\theta(\BBar(\pi))$.
  \item [\rm ($c_2$)] If $\pi'=\cdots\pi(i)m\pi(i+1)\cdots$ (resp. $\pi'=\cdots\pi(i)\overline{m}\pi(i+1)\cdots$) and $\pi(i+1)$ is the $p$th  unbar-descent-bottom of $\pi$, then let $\Psi(\pi')=(T_1,T_2',I_{m,k})$, where $T_2'$ is obtained from $T_2$ by replacing the $p$th unmarked block $(a,b)$ by two blocks $(a,2m-2k-1),(b,2m-2k)$ (resp. $(a,2m-2k),(b,2m-2k-1)$) and $I_{m,k}=\theta(\BBar(\pi))$.
\end{enumerate}

Secondly, suppose that $\pi(i+1)\in \BBar(\pi)$. We say $\pi(i+1)$ is a {\it bar-ascent-top} (resp. {\it bar-descent-bottom}) if $\pi(i)<\pi(i+1)$ (resp. $\pi(i)>\pi(i+1)$).
Consider the following two cases:
\begin{enumerate}
  \item [\rm ($c_1$)] If $\pi'=\cdots\pi(i)m\pi(i+1)\cdots$ (resp. $\pi'=\cdots\pi(i)\overline{m}\pi(i+1)\cdots$) and $\pi(i+1)$ is the $p$th bar-ascent-top of $\pi$, then let $\Psi(\pi')=(T_1',T_2,I_{m,{k+1}})$, where $T_1'$ is obtained from $T_1$ by replacing the $p$th unmarked block $(a,b)$ by two blocks $(a,2k+1),(b,2k+2)$ (resp. $(a,2k+2),(b,2k+1)$) and $I_{m,{k+1}}=\theta(\BBar(\pi))\cup\{m\}$.
  \item [\rm ($c_2$)] If $\pi'=\cdots\pi(i)m\pi(i+1)\cdots$ (resp. $\pi'=\cdots\pi(i)\overline{m}\pi(i+1)\cdots$) and $\pi(i+1)$ is the $p$th  bar-descent-bottom of $\pi$, then let $\Psi(\pi')=(T_1',T_2,I_{m,{k+1}})$, where $T_1'$ is obtained from $T_1$ by replacing the $p$th marked block $(a,b)$ by two blocks $(a,2k+1),(b,2k+2)$ (resp. $(a,2k+2),(b,2k+1)$) and $I_{m,{k+1}}=\theta(\BBar(\pi))\cup\{m\}$.
\end{enumerate}

After the above step, we write the obtained perfect matchings in standard form. Suppose that $\pi\in\mbp_{n,k}$ and $\Psi(\pi)=(T_1,T_2,I_{n,k})$.
Then $\des_B(\pi)=i+j$ if and only if $\el(T_1)+\ol(T_2)=i+j$.
By induction, we see that $\Psi$ is the desired bijection between $\mbp_{n,k}$ to $\BM_{n,k}$ for all $k$,
which also gives a constructive proof of~\eqref{NnxBnx}.

\begin{ex}
Let $\pi=\overline{3}~\overline{1}42\overline{6}7\overline{5}$.
The correspondence between $\pi$ and $\Psi(\pi)$ is built up as follows:
\begin{align*}
\overline{1}&\Leftrightarrow ((12),\emptyset,\{1\});\\
\overline{1}2&\Leftrightarrow ((12),(12),\{1\});\\
\overline{3}~\overline{1}2&\Leftrightarrow ((14)(23),(12),\{1,3\});\\
\overline{3}~\overline{1}42&\Leftrightarrow ((14)(23),(13)(24),\{1,3\});\\
\overline{3}~\overline{1}42\overline{5}&\Leftrightarrow ((14)(23)(5,6),(13)(24),\{1,3,5\});\\
\overline{3}~\overline{1}42\overline{6}~\overline{5}&\Leftrightarrow ((13)(24)(5,8)(6,7),(13)(24),\{1,3,5,6\});\\
\overline{3}~\overline{1}42\overline{6}7\overline{5}&\Leftrightarrow ((13)(24)(5,8)(6,9)(7,10),(13)(24),\{1,3,5,6,7\}).
\end{align*}
\end{ex}
\section{The String permutations of the second kind}\label{Section03}
Stirling permutations were introduced by Gessel and Stanley~\cite{Gessel78}. Let $[n]_2=\{1,1,2,2\ldots,n,n\}$.
A \emph{Stirling permutation} of order $n$ is a permutation of the multiset $[n]_2$ such that
every element between the two occurrences of $i$ is greater than $i$ for each $i\in [n]$.
For example, $\mq_2=\{1122,1221,2211\}$. Let $\sigma_1\sigma_2\cdots\sigma_{2n}\in\mqn$.
An index $i$ is a {\it descent} of $\sigma$ if $\sigma_i>\sigma_{i+1}$ or $i=2n$.
Let $C(n,k)$ be the number of Stirling permutations of $[n]_2$ with $k$ descents.
Following~\cite[Eq.~(6)]{Gessel78}, the numbers $C(n,k)$ satisfy the recurrence relation
\begin{equation}\label{Cnk-recu}
C(n,k)=kC(n-1,k)+(2n-k)C(n-1,k-1)
\end{equation}
for $n\geq2$, with the initial conditions $C(1,1)=1$ and $C(1,0)=0$.
The {\it second-order Eulerian polynomial} is defined by
$$C_n(x)=\sum_{i=1}^nC(n,k)x^k.$$

In recent years, there has been much work on Stirling permutations (see~\cite{Bona08,Dzhuma14,Janson11,Leon,Ma15,Remmel14}).
In particular, B\'ona~\cite{Bona08} introduced the plateau statistic on Stirling permutations, and proved that descents
and plateaus have the same distribution over $\mqn$. Given $\sigma\in\mqn$, the index $i$ is called a {\it plateau} if $\sigma_i=\sigma_{i+1}$.
We say that an index $i\in [2n-1]$ is an \emph{ascent plateau} if $\sigma_{i-1}<\sigma_i=\sigma_{i+1}$, where $\sigma_0=0$.
Let $\ap(\sigma)$ be the number of the ascent plateaus of $\sigma$. For example, $\ap(\textbf{2}211\textbf{3}3)=2$.
Very recently, we present a combinatorial proof of the following identity (see~\cite[Theorem 3]{MaYeh}):
\begin{equation}\label{Nnx-AP}
\sum_{\sigma\in\mqn}x^{\ap(\sigma)}=\sum_{\m\in\mmn}x^{\el(\m)}.
\end{equation}
Motivated by~\eqref{N2xzAx2z} and~\eqref{Nnx-AP}, we shall introduce Stirling permutations of the second kind.

Let $[k]^n$ denote the set of words of length $n$ in the alphabet $[k]$. For $\omega=\omega_1\omega_2\cdots\omega_n\in [k]^n$,
the reduction of $\omega$, denoted by $\redd(\omega)$, is the unique word of length $n$ obtained by replacing
the $i$th smallest entry by $i$. For example, $\redd(33224547)=22113435$.

\begin{definition}\label{def07}
A permutation $\sigma$ of the multiset $[n]_2$ is a {\it Stirling permutation of the second kind} of order $n$ whenever $\sigma$ can be written as a nonempty disjoint union of its distinct cycles and $\sigma$ has a standard cycle form satisfying the following conditions:
\begin{itemize}
  \item [\rm ($i$)] For each $i\in [n]$, the two copies of $i$ appear in exactly one cycle;
  \item [\rm ($ii$)] Each cycle is written with one of its smallest entry first and the cycles are written in increasing order of
their smallest entry;
  \item [\rm ($iii$)] The reduction of the word formed by all entries of each cycle is a Stirling permutation. In other words, if $(c_1,c_2,\ldots,c_{2k})$ is a cycle of $\sigma$, then $\redd(c_1c_2\cdots c_{2k})\in \mq_k$.
\end{itemize}
\end{definition}

Let $\mqn^2$ denote the set of Stirling permutations of the second kind of order $n$.
In the following discussion, we always write $\sigma\in\mqn^2$ in standard cycle form.
\begin{ex}
\begin{align*}
 \mq_1^2&=\{(11)\},  \mq_2^2=\{(11)(22),(1122),(1221)\}, \\
 \mq_3^2&=\{(11)(22)(33),(11)(2233),(11)(2332),(1133)(22),(1331)(22),(1122)(33),(112233),\\
 &(112332),(113322),(133122),(1221)(33),(122133),(122331),(123321),(133221)\}.
\end{align*}
\end{ex}

Let $(c_1,c_2,\ldots,c_{2k})$ be a cycle of $\sigma$. An entry $c_i$ is called a {\it cycle plateau} (resp. {\it cycle ascent}) if
$c_i=c_{i+1}$ (resp. $c_i<c_{i+1}$), where $1\leq i<2k$. Let $\cplat(\pi)$ and $\casc(\pi)$ be the number of cycle plateaus and cycle ascents of $\pi$, respectively. For example, $\cplat((1\textbf{2}21)(\textbf{3}3))=2$ and $\casc((\textbf{1}221)(33))=1$.
Now we present a dual result of~\cite[Proposition~1]{Bona08}.
\begin{prop}
For $n\geq 1$, we have $$C_n(x)=\sum_{\pi\in\mqn^2}x^{\cplat(\pi)}=\sum_{\pi\in\mqn^2}x^{\casc(\pi)+1}.$$
\end{prop}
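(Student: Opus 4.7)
The plan is to induct on $n$ using a single insertion bijection and then read off two parallel recurrences, one for each statistic, both matching \eqref{Cnk-recu}.

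First I would set up the insertion bijection: every $\sigma \in \mqn^2$ arises uniquely from some $\sigma' \in \mq_{n-1}^2$ by placing the adjacent pair $nn$ in one of the $2(n-1)$ in-cycle slots (immediately after any entry of $\sigma'$) or by appending the singleton cycle $(nn)$ at the end, giving $2n-1$ insertion sites. Legality is automatic: inside the cycle of $\sigma$ containing $n$, the two copies of $n$ must be adjacent because the reduced cycle is a Stirling permutation and no entry exceeds $n$; conversely, inserting $nn$ anywhere preserves the defining conditions because $n$ exceeds every earlier entry and the cycle openings are undisturbed. As a sanity check this recovers $|\mqn^2| = (2n-1)\,|\mq_{n-1}^2| = (2n-1)!!$.

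Next I would track $\cplat$ under each insertion. The slot between consecutive entries $c_i, c_{i+1}$ creates the new plateau $nn$ and destroys the old plateau at $c_i c_{i+1}$ iff $c_i = c_{i+1}$; the tail slot of a cycle and the new-cycle slot $(nn)$ each create a plateau without destroying one. So exactly $\cplat(\sigma')$ of the $2n-1$ sites preserve the plateau count while the remaining $2n - 1 - \cplat(\sigma')$ raise it by one. Setting $d(n,k) = \#\{\sigma \in \mqn^2 : \cplat(\sigma) = k\}$ this yields
$$d(n,k) = k\,d(n-1,k) + (2n-k)\,d(n-1,k-1),$$
which is exactly \eqref{Cnk-recu}, and together with the base case $d(1,1) = 1$ induction gives $\sum_{\sigma} x^{\cplat(\sigma)} = C_n(x)$.

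The same framework handles $\casc$, but with the tail slot and the new-cycle slot swapping roles. An in-cycle slot at $c_i, c_{i+1}$ creates the ascent $(c_i, n)$ (since $c_i < n$) and destroys the original ascent $c_i c_{i+1}$ iff $c_i < c_{i+1}$; the tail slot of a cycle creates an ascent and destroys none (net $+1$); opening the cycle $(nn)$ creates no ascent at all (net $0$). So $\casc(\sigma') + 1$ sites preserve $\casc$ and $2n - 2 - \casc(\sigma')$ sites raise it by one, and letting $e(n,k) = \#\{\sigma : \casc(\sigma) = k-1\}$ I get the identical recurrence
$$e(n,k) = k\,e(n-1,k) + (2n-k)\,e(n-1,k-1)$$
with $e(1,1) = 1$, proving the second equality. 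The delicate point — the only real obstacle — is this parallel bookkeeping: the tail-of-cycle site is a "non-plateau" site for $\cplat$ but an "ascent-creating" site for $\casc$, while the new-cycle site is the reverse, and it is precisely this swap that makes both statistics satisfy the same two-term recurrence as $C(n,k)$ despite neither the number nor the lengths of the cycles of $\sigma'$ appearing in the final totals.
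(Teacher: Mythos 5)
Your argument is correct and is essentially the paper's own proof: the same insertion of the adjacent pair $nn$ into one of the $2n-1$ sites of a permutation in $\mq_{n-1}^2$, with the same site-by-site bookkeeping showing that both statistics satisfy the recurrence \eqref{Cnk-recu}. The only difference is that you carry out the cycle-ascent case explicitly (correctly noting the role swap between the tail-of-cycle and new-cycle sites), whereas the paper dismisses it with ``along the same lines.''
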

\begin{proof}
There are two ways in which a permutation $\sigma'\in \mqn^2$ with $k$ cycle plateaus can be obtained from a
permutation $\sigma\in\mq_{n-1}^2$.
If $\cplat(\sigma)=k$, then we can put the two copies of $n$ right after a cycle plateau of $\sigma$.
This gives $k$ possibilities.
If $\cplat(\sigma)=k-1$, then we can append a new cycle $(nn)$ right after $\sigma$ or insert the two copies of $n$ into any of the remaining $2n-2-(k-1)=2n-k-1$ positions. This gives $2n-k$ possibilities.
Comparing with~\eqref{Cnk-recu}, this completes the proof of $C_n(x)=\sum_{\pi\in\mqn^2}x^{\cplat(\pi)}$. Along the same lines, one can easily prove the assertion for cycle ascents. This completes the proof.
\end{proof}

Let $(c_1,c_2,\ldots,c_{2k})$ be a cycle of $\sigma$, where $k\geq2$.
An entry $c_i$ is called a {\it cycle ascent plateau} if
$c_{i-1}<c_{i}=c_{i+1}$, where $2\leq i\leq2k-1$.
Denote by $\caplat(\sigma)$ (resp. $\cyc(\sigma)$) the number of cycle ascent plateaus (resp. cycles) of $\sigma$.
For example, $\caplat((1\textbf{2}21)({3}3))=1$.
We define $$Q_n(x,q)=\sum_{\sigma\in\mqn^2}x^{\caplat(\sigma)}q^{\cyc(\sigma)},$$
$$Q(x,q;z)=1+\sum_{n\geq 1}Q_n(x,q)\frac{z^n}{n!}.$$

Our main result of this section is the following.
\begin{theorem}
The polynomials $Q_n(x,q)$ satisfy the recurrence relation
\begin{equation}\label{Qnxq-recu}
Q_{n+1}(x,q)=(q+2nx)Q_n(x,q)+2x(1-x)\frac{\partial}{\partial x}Q_n(x,q)
\end{equation}
for $n\geq 0$, with the initial condition $Q_0(x)=1$.
Moreover,
\begin{equation}\label{Qxqz}
Q(x,q;z)=\left(\sqrt{\frac{x-1}{x-e^{2z(x-1)}}}\right)^q.
\end{equation}
\end{theorem}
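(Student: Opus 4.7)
My plan is to first establish the recurrence \eqref{Qnxq-recu} by a combinatorial insertion argument on $\mqn^2$, and then to deduce \eqref{Qxqz} by converting the recurrence into a PDE and verifying that $M(x,z)^q$ satisfies it.

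For the recurrence, fix $\sigma\in\mqn^2$ with $k=\caplat(\sigma)$ and $c=\cyc(\sigma)$, and enumerate the $\sigma'\in\mq_{n+1}^2$ obtained by inserting the pair $(n+1)(n+1)$. Either (a) $\sigma'$ is obtained by appending a brand-new length-$2$ cycle $(n+1)(n+1)$, which adds one cycle (contributing a factor of $q$) and no cap, or (b) the pair is inserted into one of the interior or trailing gaps of an existing cycle. Because every cycle must begin with its smallest entry, insertion at the leading gap is forbidden; a cycle of length $2m$ thus admits exactly $2m$ valid gaps, giving $2n$ gaps in total and confirming $|\mq_{n+1}^2|=(2n+1)\,|\mqn^2|$.

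To track $\caplat$, observe that inserting between old positions $j$ and $j+1$ turns $\ldots,c_{j-1},c_j,c_{j+1},\ldots$ into $\ldots,c_{j-1},c_j,n+1,n+1,c_{j+1},\ldots$, and the first copy of $n+1$ is automatically a cycle ascent plateau since $c_j\leq n<n+1$. A pre-existing cap at old position $j$ (which requires $c_{j-1}<c_j=c_{j+1}$) is destroyed because the right neighbour of $c_j$ becomes $n+1\neq c_j$, and likewise a cap at old position $j+1$ (requiring $c_j<c_{j+1}=c_{j+2}$) is destroyed because its new left neighbour $n+1>c_{j+1}$ violates the required inequality. These two events are mutually exclusive (the first forces $c_j=c_{j+1}$, the second $c_j<c_{j+1}$), the trailing gap destroys nothing since the final position of a cycle cannot be a cap, and each pre-existing cap at position $i$ is destroyed by precisely the two gaps indexed by $j=i-1$ and $j=i$. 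Hence among the $2n$ gap-insertions, exactly $2k$ give $\caplat(\sigma')=k$ and the remaining $2n-2k$ give $\caplat(\sigma')=k+1$. Combined with case (a), the $2n+1$ children of $\sigma$ contribute
\[
q\cdot x^kq^c \;+\; 2k\cdot x^kq^c \;+\; (2n-2k)\cdot x^{k+1}q^c \;=\; x^k q^c\bigl[q+2nx+2k(1-x)\bigr],
\]
and summing over $\sigma$ using $x\,\partial_x Q_n = \sum_\sigma\caplat(\sigma)\,x^{\caplat(\sigma)}q^{\cyc(\sigma)}$ yields \eqref{Qnxq-recu}.

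For the generating function, multiplying \eqref{Qnxq-recu} by $z^n/n!$ and summing over $n\geq 0$ produces the first-order PDE
\[
(1-2xz)\,\partial_z Q \;=\; qQ \,+\, 2x(1-x)\,\partial_x Q,\qquad Q(x,q;0)=1.
\]
The substitution $F=M^q$ gives $F_z=qM^{q-1}M_z$ and $F_x=qM^{q-1}M_x$, so after dividing by $qM^{q-1}$ the PDE reduces to the single identity $(1-2xz)M_z = M + 2x(1-x)M_x$. I would verify this from the defining relation $M^2\bigl(x-e^{2z(x-1)}\bigr)=x-1$: implicit differentiation in $z$ yields $M_z=xM^3+(1-x)M$, and implicit differentiation in $x$ yields a companion formula for $M_x$ involving $e^{2z(x-1)}$; substituting $e^{2z(x-1)}=x-(x-1)/M^2$ to eliminate the exponential shows that both sides collapse to $xM^3+(1-x)M-2x^2zM^3-2xz(1-x)M$. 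Uniqueness of power-series solutions with prescribed value at $z=0$ then forces $Q(x,q;z)=M(x,z)^q$. The most delicate step is the $\caplat$ bookkeeping above, namely verifying the mutual exclusivity of the two destruction conditions and the clean double-count of $2k$ destroying gaps; once this is settled, the PDE verification is purely mechanical.
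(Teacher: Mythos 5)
Your proposal is correct and follows essentially the same route as the paper: establish \eqref{Qnxq-recu} by inserting the adjacent pair $(n+1)(n+1)$ into one of the $2n$ admissible gaps or as a new cycle, with $2\caplat(\sigma)$ gaps preserving the cycle ascent plateau count and $2n-2\caplat(\sigma)$ gaps raising it by one, then pass to the PDE $(1-2xz)\partial_z Q=qQ+2x(1-x)\partial_x Q$ and verify that $M(x,z)^q$ satisfies it. Your treatment of the destruction/creation bookkeeping (mutual exclusivity of the two destruction conditions, exclusion of the leading gap) is more explicit than the paper's, which simply asserts the $2\caplat$ versus $2n-2\caplat$ split, but the underlying argument is identical.
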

\begin{proof}
Given $\sigma\in\mqn^2$.
Let $\sigma_i$ be an element of $\mq_{n+1}^2$ obtained from $\sigma$ by inserting the two copies of
$n+1$, in the standard cycle decomposition of $\sigma$, right after $i\in [n]$ or as a new cycle $(n+1,n+1)$ if $i=n+1$.
It is clear that
$$ \cyc(\sigma_i)=\left\{
              \begin{array}{ll}
                \cyc(\sigma), & \hbox{if $i\in [n]$;} \\
                \cyc(\sigma)+1, & \hbox{if $i=n+1$.}
              \end{array}
            \right.
$$
Therefore, we have
\begin{align*}
Q_{n+1}(x,q)&=\sum_{\pi\in\mqnn^2}x^{\caplat(\pi)}q^{\cyc(\pi)}\\
&=\sum_{i=1}^{n+1}\sum_{\sigma_i\in\mqn^2}x^{\caplat(\sigma_i)}q^{\cyc(\sigma_i)}\\
&=\sum_{\sigma\in\mqn^2}x^{\caplat(\sigma)}q^{\cyc(\sigma)+1}+\sum_{i=1}^{n}\sum_{\sigma_i\in\mqn^2}x^{\caplat(\sigma_i)}q^{\cyc(\sigma_i)}\\
&=qQ_n(x,q)+\sum_{\sigma\in\mqn^2}(2\caplat(\sigma)x^{\caplat(\sigma)}+(2n-2\caplat(\sigma))x^{\caplat(\sigma)+1})q^{\cyc(\sigma)}
\end{align*}
and~\eqref{Qnxq-recu} follows.
By rewriting~\eqref{Qnxq-recu} in terms of the exponential generating function $Q(x,q;z)$, we have
\begin{equation}\label{Qxz-pde}
(1-2xz)\frac{\partial}{\partial z}Q(x,q;z)=qQ(x,q;z)+2x(1-x)\frac{\partial}{\partial x}Q(x,q;z).
\end{equation}
It is routine to check that the generating function
$$\widetilde{Q}(x,q;z)=\left(\sqrt{\frac{x-1}{x-e^{2z(x-1)}}}\right)^q$$
satisfies~\eqref{Qxz-pde}. Also, this generating function gives $\widetilde{Q}(x,q;0)=1,\widetilde{Q}(x,0;z)=1$
and $\widetilde{Q}(0,q;z)=e^{qz}$. Hence $Q(x,q;z)=\widetilde{Q}(x,q;z)$.
\end{proof}

Combining~\eqref{N2xt02} and~\eqref{Qxqz}, we get $Q(x,q;z)={M^q(x,z)}$.
Thus $Q_n(x,1)=M_n(x)$.
Moreover, it follows from~\eqref{Qnxq-recu} that
$Q_{n+1}(1,q)=(q+2n)Q_n(1,q)$.
So the following corollary is immediate.
\begin{cor}
For $n\geq 1$, we have
$$\sum_{\sigma\in\mqn^2}q^{\cyc(\sigma)}=q(q+2)\cdots (q+2n-2).$$
\end{cor}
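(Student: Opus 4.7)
The plan is to specialize the recurrence \eqref{Qnxq-recu} for $Q_n(x,q)$ at the point $x=1$. Observe that the differential term $2x(1-x)\frac{\partial}{\partial x}Q_n(x,q)$ carries a factor of $(1-x)$, which vanishes when $x=1$ regardless of the value of the partial derivative. Consequently, setting $x=1$ in \eqref{Qnxq-recu} collapses it to the simple first-order recurrence
\begin{equation*}
Q_{n+1}(1,q)=(q+2n)\,Q_n(1,q), \qquad n\geq 0,
\end{equation*}
with the initial condition $Q_0(1,q)=1$ inherited from the theorem.

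From this, iteration gives immediately $Q_n(1,q)=\prod_{k=0}^{n-1}(q+2k)=q(q+2)\cdots(q+2n-2)$. Since by definition
$Q_n(1,q)=\sum_{\sigma\in\mqn^2}q^{\cyc(\sigma)}$ (specializing $x=1$ kills the cycle ascent plateau statistic), the claimed closed form follows by a one-line induction on $n$. A quick sanity check at $n=1$: $\mq_1^2=\{(11)\}$ has a unique element with one cycle, so the sum equals $q$, matching the empty-augmented product $q$.

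The only "obstacle" is justifying that the differential term really may be discarded at $x=1$; but since $Q_n(x,q)$ is a polynomial in $x$, its partial derivative at $x=1$ is a perfectly finite quantity and the prefactor $(1-x)$ simply annihilates it. Alternatively, one can derive the same identity directly from the exponential generating function: taking the limit $x\to 1$ in \eqref{Qxqz} via L'Hôpital (or by expanding $e^{2z(x-1)}=1+2z(x-1)+O((x-1)^2)$) gives $Q(1,q;z)=(1-2z)^{-q/2}$, whose binomial expansion produces $q(q+2)\cdots(q+2n-2)/n!$ as the coefficient of $z^n$. Either route yields the corollary with no serious computational work.
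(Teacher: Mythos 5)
Your proposal is correct and follows exactly the paper's own route: the paper likewise observes that setting $x=1$ in \eqref{Qnxq-recu} kills the $2x(1-x)\frac{\partial}{\partial x}$ term and yields $Q_{n+1}(1,q)=(q+2n)Q_n(1,q)$, from which the product formula is immediate. Your additional generating-function check via $Q(1,q;z)=(1-2z)^{-q/2}$ is a nice sanity verification but not a different proof in substance.
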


We now introduce a statistic on $\mqn$ that is equidistributed with the cycle statistic on $\mqn^2$.
Denote by $[i,j]$ the interval of all integers between $i$ and $j$, where $i\leq j$. In particular, when $i=j$, we denote by $[i]$ the singleton interval.
For $\sigma=\sigma_1\sigma_2\cdots\sigma_{2n}\in\mqn$,
if $\sigma_i>\sigma_{i+1}$, all of the different elements before $\sigma_{i+1}$ appear a second time and all of these different entries construct an interval, then this interval is called a {\it descent interval} of $\sigma$, where $i\in[2n]$ and $\sigma_{2n+1}=0$. For example, if $\sigma=44223311$ and $\tau=113322$, then the descent intervals of $\sigma$ are $[4],[2,4]$ and $[1,4]$, and that of $\tau$ is $[1,3]$.

Let $\desi(\sigma)$ be the number of descent intervals of $\sigma$.
Define
$$L_n(q)=\sum_{\sigma\in\mqn}q^{\desi(\sigma)}.$$
Let $\sigma^{(i)}\in\mq_{n+1}$ be obtained from $\sigma\in\mqn$ by inserting two copies of $n+1$ before $\sigma_i$.
It is evident that
$$ \desi(\sigma^{(i)})=\left\{
              \begin{array}{ll}
                \desi(\sigma)+1, & \hbox{if $i=1$;} \\
                \desi(\sigma), & \hbox{otherwise.}
              \end{array}
            \right.
$$
Thus
\begin{equation*}\label{Lnq-recu}
L_{n+1}(q)=(q+2n)L_n(q).
\end{equation*}
The following result is immediate.
\begin{prop}
For $n\geq 1$, we have $$\sum_{\sigma\in\mqn}q^{\desi(\sigma)}=\sum_{\sigma\in\mqn^2}q^{\cyc(\sigma)}.$$
\end{prop}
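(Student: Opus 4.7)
The plan is to match both sides of the identity to the common closed-form product $q(q+2)(q+4)\cdots(q+2n-2)$, using only ingredients already in place in the excerpt. Concretely, I would show that $L_n(q)=\sum_{\sigma\in\mqn}q^{\desi(\sigma)}$ and $F_n(q):=\sum_{\sigma\in\mqn^2}q^{\cyc(\sigma)}$ satisfy the same first-order recurrence $f_{n+1}(q)=(q+2n)f_n(q)$ together with the same base value $f_1(q)=q$, and then conclude by a one-line induction.

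For the base case I would check $n=1$ directly: $\mqn$ for $n=1$ consists only of the word $11$, and the unique index $i=2$ satisfies $\sigma_2=1>\sigma_3=0$, producing the single descent interval $[1]$, so $L_1(q)=q$. On the cycle side, $\mq_1^2=\{(11)\}$ has a single cycle, so $F_1(q)=q$.

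For the recurrences, the equation $L_{n+1}(q)=(q+2n)L_n(q)$ is already established in the excerpt via the insertion map $\sigma\mapsto\sigma^{(i)}$: inserting two copies of $n+1$ at the front of $\sigma$ creates exactly one new descent interval (the full block of all letters), while any of the other $2n$ insertion positions leaves $\desi$ unchanged. For $F_n(q)$, I would specialize $x=1$ in the recurrence
\begin{equation*}
Q_{n+1}(x,q)=(q+2nx)Q_n(x,q)+2x(1-x)\frac{\partial}{\partial x}Q_n(x,q)
\end{equation*}
from the theorem above; the derivative term is annihilated by the factor $1-x$, yielding $F_{n+1}(q)=Q_{n+1}(1,q)=(q+2n)Q_n(1,q)=(q+2n)F_n(q)$. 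The corollary of that theorem already records the resulting product formula, so the identity $L_n(q)=F_n(q)$ follows immediately by induction.

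I do not anticipate any real obstacle: all the technical work — deriving the cycle recurrence through the grammar/PDE argument, and the descent-interval recurrence through the insertion of $n+1$ — has already been carried out in the preceding sections, so the proposition reduces to matching two explicit first-order recurrences with identical initial values. A bijective refinement (constructing an explicit map $\mqn\to\mqn^2$ that sends $\desi$ to $\cyc$) would be a natural follow-up, but is not needed to settle the statement and would merely mirror the two insertion procedures already used.
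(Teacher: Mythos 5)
Your proof is correct and follows essentially the same route as the paper: the paper likewise derives $L_{n+1}(q)=(q+2n)L_n(q)$ from the insertion of the two copies of $n+1$, obtains $Q_{n+1}(1,q)=(q+2n)Q_n(1,q)$ by setting $x=1$ in the recurrence for $Q_n(x,q)$, and concludes that both sides equal $q(q+2)\cdots(q+2n-2)$. Your explicit check of the base case $n=1$ is a small (welcome) addition that the paper leaves implicit.
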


Let $A_n(x)=\sum_{k=0}^{n-1}\Eulerian{n}{k}x^k$. The numbers $\Eulerian{n}{k}$ are called Eulerian numbers and satisfy the recurrence relation
$$\Eulerian{n}{k}=(k+1)\Eulerian{n-1}{k}+(n-k)\Eulerian{n-1}{k-1},$$
with initial conditions $\Eulerian{0}{0}=1$ and $\Eulerian{0}{k}$ for $k\geq 1$ (see~\cite{Bona12,Bre00} for instance).
Hence
\begin{equation}\label{Anx-recu}
A_{n+1}(x)=(1+nx)A_n(x)+x(1-x)A_n'(x),
\end{equation}
Let $\mcq_n$ denote the set of Stirling permutations of $\mqn^2$ with
only one cycle, which can be named as the set of {\it cyclic Stirling permutations}. Define $$Y_n(x)=\sum_{\sigma\in\mcq_n}x^{\caplat(\sigma)}.$$
Comparing~\eqref{Qnxq-recu} with~\eqref{Anx-recu}, we get the following corollary.
\begin{cor}
For $n\geq 1$, we have
\begin{equation*}
Y_{n+1}(x)=2^nxA_n(x).
\end{equation*}
\end{cor}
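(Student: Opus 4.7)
The plan is to extract from the bivariate recurrence~\eqref{Qnxq-recu} a univariate recurrence for $Y_n(x)$, and then match it term-by-term against the classical Eulerian recurrence~\eqref{Anx-recu} by induction. The key bridge is the observation that cyclic Stirling permutations are precisely the $\sigma \in \mqn^2$ with $\cyc(\sigma) = 1$, so
$$Y_n(x) = [q^1]\, Q_n(x,q).$$

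First I would take the coefficient of $q^1$ on both sides of~\eqref{Qnxq-recu}. The term $(q+2nx) Q_n(x,q)$ contributes $Q_n(x,0) + 2nx\, Y_n(x)$, while $2x(1-x) \frac{\partial}{\partial x} Q_n(x,q)$ contributes $2x(1-x) Y_n'(x)$. Since every Stirling permutation of the second kind has at least one cycle, $Q_n(x,0) = 0$ for $n \geq 1$, producing the clean recurrence
$$Y_{n+1}(x) = 2nx\, Y_n(x) + 2x(1-x) Y_n'(x) \quad (n \geq 1).$$

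Next I would verify the base case $Y_2(x) = 2xA_1(x) = 2x$ directly from $\mcq_2 = \{(1122),(1221)\}$, each of which contributes exactly one cycle ascent plateau. Inductively assuming $Y_n(x) = 2^{n-1} x A_{n-1}(x)$, substitution into the recurrence above and factoring $2^n x$ gives
$$Y_{n+1}(x) = 2^n x \bigl[(1+(n-1)x) A_{n-1}(x) + x(1-x) A_{n-1}'(x)\bigr],$$
which by~\eqref{Anx-recu} equals $2^n x A_n(x)$. The one algebraic point worth flagging is the identity $nx + (1-x) = 1 + (n-1)x$, which is exactly what reconciles the coefficient $2nx$ on the $Y$-side with the coefficient $1 + (n-1)x$ on the Eulerian side.

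I do not expect any serious obstacle. Once the reduction from the bivariate $Q$-recurrence to the univariate $Y$-recurrence is in place, the induction is mechanical. The most delicate (though still elementary) point is the vanishing $Q_n(x,0) = 0$ for $n \geq 1$, which is what permits the clean form of the $Y$-recurrence that lines up cleanly with~\eqref{Anx-recu}.
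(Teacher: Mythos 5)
Your proof is correct and takes essentially the same route as the paper, which simply compares the recurrence \eqref{Qnxq-recu} with \eqref{Anx-recu}; you have filled in exactly the intended details (extracting the coefficient of $q^1$, noting $Q_n(x,0)=0$ for $n\geq 1$, and running the induction from the base case $Y_2(x)=2x$).
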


\section{The distribution of cycle ascent plateaus and fixed points on $\mqn^2$}\label{Section04}
Given $\sigma\in\mqn^2$. Let the entry $k\in[n]$ be called a {\it fixed point} of $\sigma$ if $(kk)$ is a cycle of $\sigma$.
The number of fixed points of $\sigma$ is defined by
$$\fix(\sigma)=\#\{k\in[n]: (kk)~\textrm{is a cycle of $\sigma$}\}.$$
For example, $\fix((1133)(22))=1$.
Define
\begin{align*}
P_n(x,y,q)&=\sum_{\sigma\in\mqn^2}x^{\caplat(\sigma)}y^{\fix(\sigma)}q^{\cyc(\pi)},\\
P(x,y,q;z)&=\sum_{n\geq 0}P_n(x,y,q)\frac{z^n}{n!}.
\end{align*}
Now we present the main result of this section.
\begin{theorem}\label{thm-Pnxyq}
For $n\geq 1$, the polynomials $P_n(x,y,q)$ satisfy the recurrence relation
\begin{equation}\label{Pxyq-Ax}
P_{n+1}(x,y,q)=qyP_n(x,y,q)+qx\sum_{k=0}^{n-1}\binom{n}{k}P_k(x,y,q)2^{n-k}A_{n-k}(x),
\end{equation}
with the initial conditions $P_0(x,y,q)=1,P_1(x,y,q)=yq$. Moreover,
\begin{equation}\label{Pn-recurrence}
P_{n+1}(x,y,q)=(2nx+qy)P_n(x,y,q)+2x(1-x)\frac{\partial}{\partial x}P_n(x,y,q)+2x(1-y)\frac{\partial}{\partial y}P_n(x,y,q).
\end{equation}
Furthermore,
\begin{equation}\label{Pxyqz-explicit}
P(x,y,q;z)=e^{qz(y-1)}Q(x,q;z).
\end{equation}
\end{theorem}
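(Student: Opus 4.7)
My plan is to prove the three displayed identities in order: first the combinatorial recurrence \eqref{Pxyq-Ax} by a direct decomposition, then the explicit EGF \eqref{Pxyqz-explicit} by recognising an exponential convolution, and finally the partial-differential recurrence \eqref{Pn-recurrence} by differentiating the EGF.

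For \eqref{Pxyq-Ax}, I would classify each $\sigma\in\mqnn^2$ according to the cycle $C$ containing $n+1$. If $C=(n+1,n+1)$, then $\sigma\setminus C\in\mqn^2$ while $C$ contributes a fresh cycle and a fresh fixed point, yielding the term $qyP_n(x,y,q)$. Otherwise $C$ contains $n-k$ additional values from $[n]$ for some $k\in\{0,1,\ldots,n-1\}$; choose these values in $\binom{n}{k}$ ways, arrange them together with $n+1$ into a cyclic Stirling permutation of the second kind of order $n-k+1$ (which contributes $Y_{n-k+1}(x)$ by the definition of $Y$), and use the remaining $k$ values to form an arbitrary element of $\mq_k^2$ (contributing $P_k(x,y,q)$), while the cycle $C$ itself yields an additional factor $q$. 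Applying the corollary $Y_{n-k+1}(x)=2^{n-k}xA_{n-k}(x)$ from Section~\ref{Section03} then produces \eqref{Pxyq-Ax}.

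For \eqref{Pxyqz-explicit}, I would translate \eqref{Pxyq-Ax} into an EGF identity. Since the EGF of $\{2^m A_m(x)\}_{m\geq 1}$ is $(A(x,2z)-1)/x$, the convolution on the right of \eqref{Pxyq-Ax} becomes
\[\partial_z P(x,y,q;z)=q(y-1)P(x,y,q;z)+qP(x,y,q;z)\,A(x,2z).\]
Substituting the ansatz $P=e^{qz(y-1)}Q(x,q;z)$ and using $Q(x,q;z)=M^q(x,z)$, the verification reduces to the analytic identity $\partial_z\log M(x,z)=A(x,2z)$, which is a direct calculation from the closed forms $M^2(x,z)=(x-1)/(x-e^{2z(x-1)})$ and $A(x,2z)=(x-1)e^{2z(x-1)}/(x-e^{2z(x-1)})$. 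Combined with the initial value $P(x,y,q;0)=1$, this establishes \eqref{Pxyqz-explicit}.

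For \eqref{Pn-recurrence}, I would differentiate the ansatz $P=e^{qz(y-1)}Q$ and combine with the PDE \eqref{Qxz-pde} governing $Q$. Using $\partial_y P=qzP$ and a brief algebraic manipulation, one obtains
\[(1-2xz)\partial_z P=qyP+2x(1-x)\partial_x P+2x(1-y)\partial_y P,\]
and extracting the coefficient of $z^n/n!$ on both sides (noting $[z^n/n!](z\partial_z P)=nP_n$) gives \eqref{Pn-recurrence}. The conceptual hinge of the whole argument is the identification of the factor $2^{n-k}A_{n-k}(x)$ in \eqref{Pxyq-Ax} with $Y_{n-k+1}(x)/x$; this is the combinatorial counterpart of the analytic identity $\partial_z\log M=A(x,2z)$, and I expect it to be the main obstacle, since it requires invoking the corollary linking cyclic Stirling permutations of the second kind to the type-$A$ Eulerian polynomials.
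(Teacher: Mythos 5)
Your proposal is correct, but it takes a genuinely different route from the paper. The paper proves \eqref{Pxyq-Ax} and \eqref{Pn-recurrence} via context-free grammars: Lemma~\ref{lemma01} encodes the triple statistic $(\cyc,\fix,\caplat)$ through a labeling scheme with grammar $G=\{a\rightarrow qab^2,\ b\rightarrow b^{-1}c^2d^2,\ c\rightarrow cd^2,\ d\rightarrow c^2d\}$, Lemma~\ref{lemma02} computes $D^n(b^2)=2^nd^{2n}c^2A_n(c^2/d^2)$, and then \eqref{Pxyq-Ax} drops out of the Leibniz expansion $D^{n+1}(a)=q\sum_k\binom{n}{k}D^k(a)D^{n-k}(b^2)$, while \eqref{Pn-recurrence} comes from the recurrence for $S_n(i,j,k)$ read off the labels; \eqref{Pxyqz-explicit} is obtained by the inclusion--exclusion identity $P_n=\sum_i\binom{n}{i}(yq-q)^iQ_{n-i}(x,q)$ rather than by any differential equation. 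Your argument replaces the grammar machinery with a direct decomposition of $\sigma\in\mq_{n+1}^2$ by the cycle containing $n{+}1$ (this is sound: the cycle reduces bijectively to an element of $\mcq_{n-k+1}$ with $\caplat$ preserved, it is never a fixed point when $k\le n-1$, and the remaining cycles reduce to an arbitrary element of $\mq_k^2$, so the weights multiply correctly), then leans on the corollary $Y_{n-k+1}(x)=2^{n-k}xA_{n-k}(x)$ from Section~\ref{Section03} exactly where the paper leans on Lemma~\ref{lemma02} --- these are the same fact in two guises. Your derivations of \eqref{Pxyqz-explicit} from the EGF form of \eqref{Pxyq-Ax} via $\partial_z\log M=A(x,2z)$, and of \eqref{Pn-recurrence} by differentiating $P=e^{qz(y-1)}Q$ against \eqref{Qxz-pde}, both check out (I verified the algebra, including $[z^n/n!](z\partial_zP)=nP_n$). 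What your route buys is a transparent combinatorial meaning for the convolution in \eqref{Pxyq-Ax} and shorter analysis; what the paper's route buys is a self-contained proof of \eqref{Pn-recurrence} that does not pass through the closed form \eqref{Pxyqz-explicit}, plus the reusable labeling scheme. The only caveat is that your chain of implications makes \eqref{Pn-recurrence} logically dependent on \eqref{Pxyqz-explicit} and hence on \eqref{Pxyq-Ax}, whereas the paper establishes it independently; this is a stylistic difference, not a gap.
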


In the following, we shall prove Theorem~\ref{thm-Pnxyq} by using context-free grammars.
For an alphabet $A$, let $\mathbb{Q}[[A]]$ be the rational commutative ring of formal power
series in monomials formed from letters in $A$. Following~\cite{Chen93}, a context-free grammar over
$A$ is a function $G: A\rightarrow \mathbb{Q}[[A]]$ that replace a letter in $A$ by a formal function over $A$.
The formal derivative $D$ is a linear operator defined with respect to a context-free grammar $G$. More precisely,
the derivative $D=D_G$: $\mathbb{Q}[[A]]\rightarrow \mathbb{Q}[[A]]$ is defined as follows:
for $x\in A$, we have $D(x)=G(x)$; for a monomial $u$ in $\mathbb{Q}[[A]]$, $D(u)$ is defined so that $D$ is a derivation,
and for a general element $q\in\mathbb{Q}[[A]]$, $D(q)$ is defined by linearity.
\begin{lemma}\label{lemma01}
If $A=\{a,b,c,d\}$ and
$G=\{a\rightarrow qab^2, b\rightarrow b^{-1}c^2d^2, c\rightarrow cd^2, d\rightarrow c^2d\}$,
then
\begin{equation}\label{Dna}
D^n(a)=a\sum_{\sigma\in\mqn^2}q^{\cyc(\sigma)}b^{2\fix(\sigma)}c^{2\caplat(\sigma)}d^{2n-2\fix(\sigma)-2\caplat(\sigma)}.
\end{equation}
\end{lemma}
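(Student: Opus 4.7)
The plan is to prove Lemma~\ref{lemma01} by induction on $n$, reading the four letters $a,b,c,d$ as trackers of the features appearing in the weight monomial on the right: $a$ is a persistent placeholder, every cycle of $\sigma$ eventually contributes a factor $q$ (from $a\to qab^2$), every fixed point contributes $b^2$, every cycle ascent plateau contributes $c^2$, and every remaining position of $\sigma$ contributes $d^2$, so that the total $(b,c,d)$-degree is $2n$. The base case $n=0$ is immediate: $D^0(a)=a$ matches the empty permutation with $\cyc=\fix=\caplat=0$. As a warm-up, $n=1$ gives $D(a)=qab^2$ and is matched by the unique $(11)\in\mq_1^2$ with $\cyc=\fix=1$ and $\caplat=0$.

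For the inductive step, I would apply $D$ to each monomial produced by the inductive hypothesis. Writing $u=2\fix(\sigma)$, $v=2\caplat(\sigma)$ and $w=2n-u-v$, the Leibniz rule together with the four rules of $G$ yields
\[
D(ab^u c^v d^w)=qab^{u+2}c^v d^w+u\,ab^{u-2}c^{v+2}d^{w+2}+v\,ab^u c^v d^{w+2}+w\,ab^u c^{v+2}d^w.
\]
The core of the proof is to match these four terms, after summing over $\sigma\in\mq_n^2$, with the four natural families of insertions of the two new copies of $n+1$ that produce each $\sigma'\in\mq_{n+1}^2$, in the same spirit as the recurrence for $Q_n(x,q)$ proved in Section~\ref{Section03}.

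The intended combinatorial correspondence reads as follows: (a) appending a new fixed-point cycle $(n+1,n+1)$ increments both $\cyc$ and $\fix$ by one, matching $qab^{u+2}c^v d^w$; (b) inserting the new block into either of the two slots of an existing fixed-point cycle $(kk)$ turns that cycle into a length-four cycle whose inner pair is a CAP, matching $u\,ab^{u-2}c^{v+2}d^{w+2}$ since there are $u=2\fix$ such slots; (c) inserting between positions $j-1,j$ or between positions $j,j+1$, where $j$ is an existing CAP, destroys that old CAP but immediately replaces it with the CAP carried by the new $n+1$ plateau, leaving $\caplat$ unchanged and matching $v\,ab^u c^v d^{w+2}$ with $v=2\caplat$; (d) inserting at any of the remaining $w=2n-2\fix-2\caplat$ slots creates a new CAP without disturbing any old one and matches $w\,ab^u c^{v+2}d^w$.

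The heart of the argument, and the main obstacle, is the case analysis behind (c) and (d). For a fixed insertion "after position $i$" in a cycle $(c_1,\ldots,c_{2k})$, the key observations are: (i) the new $n+1$ plateau always produces a CAP at its first position, because its left neighbour $c_i<n+1$; (ii) an old CAP at position $j$ persists in the enlarged cycle precisely when $i\notin\{j-1,j\}$, since only those two slots break either the equality $c_j=c_{j+1}$ or the inequality $c_{j-1}<c_j$; and (iii) the two-slot CAP-regions of distinct CAPs are disjoint, since a CAP at $j$ forces $c_{j-1}<c_j$ whereas an adjacent CAP at $j-1$ would force $c_{j-1}=c_j$. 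Combining (i)--(iii) shows that each non-fix insertion strictly increases $\caplat$ by one except on exactly $2\caplat$ slots, which leave $\caplat$ unchanged; together with the obvious enumeration of the fix-cycle and new-cycle insertions this partitions the $2n+1$ extensions of $\sigma$ so that their weights sum term-by-term to the four grammar terms above, closing the induction.
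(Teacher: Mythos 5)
Your proof is correct and follows essentially the same route as the paper: the paper organizes the induction by attaching labels $a,b,c,d$ to the insertion slots of $\sigma$ and matching each labeled slot with a production rule of $G$, which is exactly your cases (a)--(d) matching the four Leibniz terms, including the same key observations that an insertion adjacent to an old cycle ascent plateau trades it for a new one and that the two-slot regions of distinct cycle ascent plateaus are disjoint. No substantive difference in approach.
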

\begin{proof}
Let $\mqn^2(i,j,k)=\{\sigma\in\mqn^2: \cyc(\sigma)=i,\fix(\sigma)=j,\caplat(\sigma)=k\}$.
Given $\sigma\in\mqn^2(i,j,k)$. We now introduce a labeling scheme for $\sigma$:
\begin{itemize}
  \item [\rm ($i$)] Put a superscript label $a$ at the end of $\sigma$ and a superscript $q$ before each cycle of $\sigma$;
  \item [\rm ($ii$)] If $k$ is a fixed point of $\sigma$, then we put a superscript label $b$ right after each $k$;
  \item [\rm ($iii$)] Put superscript labels $c$ immediately before and right after each cycle ascent plateau;
  \item [\rm ($iv$)] In each of the remaining positions
except the first position of each cycle, we put a superscript label $d$.
\end{itemize}

When $n=1$, we have $\mq_1^2(1,1,0)=\{^q(1^b1^b)^a\}$.
When $n=2$, we have $\mq_2^2(2,2,0)=\{^q(1^b1^b)^q(2^b2^b)^a\}$ and $\mq_2^2(1,0,1)=\{^q(1^d1^c2^c2^d)^a,~^q(1^c2^c2^d1^d)^a\}$.
Let $n=m$. Suppose we get all labeled permutations in $\mq_m^2(i,j,k)$ for all $i,j,k$, where $m\geq 2$. We consider the case $n=m+1$.
Let $\sigma'\in\mq_{m+1}^2$ be obtained from $\sigma\in\mq_{m}^2(i,j,k)$ by inserting two copies of the entry $m+1$ into $\sigma$.
Now we construct a correspondence, denoted by $\vartheta$, between $\sigma$ and $\sigma'$.
Consider the following cases:
\begin{itemize}
  \item [\rm ($c_1$)] If the two copies of $m+1$ are put at the end of $\sigma$ as a new cycle $((m+1)(m+1))$, then we leave all labels of $\sigma$ unchanged except the last cycle. In this case, the correspondence $\vartheta$ is defined by
$$\sigma=\cdots (\cdots)^a\xlongleftrightarrow{\vartheta}\sigma'=\cdots (\cdots)^q((m+1)^b(m+1)^b)^a,$$
which corresponds to the operation $a\rightarrow qab^2$. Moreover, $\sigma'\in\mq_{m+1}^2(i+1,j+1,k)$.
  \item [\rm ($c_2$)]If the two copies of $m+1$ are inserted to a position of $\sigma$ with label $b$, then $\vartheta$ corresponds to the operation $b\rightarrow b^{-1}c^2d^2$. In this case, $\sigma'\in\mq_{m+1}^2(i,j-1,k+1)$.
  \item [\rm ($c_3$)] If the two copies of $m+1$ are inserted to a position of $\sigma$ with label $c$, then $\vartheta$ corresponds to the operation $c\rightarrow cd^2$. In this case, $\sigma'\in\mq_{m+1}^2(i,j,k)$.
  \item [\rm ($c_4$)] If the two copies of $m+1$ are inserted to a position of $\sigma$ with label $d$, then $\vartheta$ corresponds to the operation $c\rightarrow c^2d$. In this case, $\sigma'\in\mq_{m+1}^2(i,j,k+1)$.
\end{itemize}
By induction, we see that $\vartheta$ is the desired correspondence between permutations in $\mq_m^2$ and $\mq_{m+1}^2$,
which also gives a constructive proof of~\eqref{Dna}.
\end{proof}

\begin{lemma}\label{lemma02}
If $A=\{b,c,d\}$ and
$G=\{b\rightarrow b^{-1}c^2d^2, c\rightarrow cd^2, d\rightarrow c^2d\}$,
then
\begin{equation*}
D^n(b^2)=2^n\sum_{k=0}^{n-1}\Eulerian{n}{k}c^{2k+2}d^{2n-2k}=2^nd^{2n}c^2A_n\left(\frac{c^2}{d^2}\right)~\textrm{for $n\ge 1$}.
\end{equation*}
\end{lemma}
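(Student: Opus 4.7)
The plan is to proceed by induction on $n$. For the base case $n=1$, the product rule together with $D(b) = b^{-1}c^2 d^2$ immediately gives $D(b^2) = 2b \cdot D(b) = 2c^2 d^2$, which matches the right-hand side since $\Eulerian{1}{0} = 1$.

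For the inductive step, I would apply $D$ termwise to the inductive hypothesis $D^n(b^2) = 2^n \sum_{k=0}^{n-1} \Eulerian{n}{k} c^{2k+2} d^{2n-2k}$. Using $D(c) = cd^2$ and $D(d) = c^2 d$, the product rule yields
$$D\bigl(c^{2k+2} d^{2n-2k}\bigr) = (2k+2)\, c^{2k+2} d^{2n-2k+2} + (2n-2k)\, c^{2k+4} d^{2n-2k}.$$
I would then re-index the second family of terms via the shift $k \mapsto k-1$ so that every contribution is expressed in terms of the common monomial $c^{2k+2} d^{2n+2-2k}$, and collect coefficients. The resulting coefficient of this monomial turns out to be
$$2^{n+1}\bigl[(k+1)\Eulerian{n}{k} + (n-k+1)\Eulerian{n}{k-1}\bigr],$$
which by the Eulerian recurrence recalled just before the lemma is precisely $2^{n+1}\Eulerian{n+1}{k}$. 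This gives the formula at $n+1$. The second equality in the statement, identifying the sum with $2^n d^{2n} c^2 A_n(c^2/d^2)$, would then follow immediately from the definition $A_n(x) = \sum_{k=0}^{n-1} \Eulerian{n}{k} x^k$ by factoring $c^2 d^{2n}$ out of every summand.

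The hard part will be nothing more than careful bookkeeping of the index shift so that the Eulerian recurrence can be invoked cleanly; there is no real combinatorial or analytic obstacle. I would also note that, unlike Lemma~\ref{lemma01}, the letter $b$ disappears after a single differentiation because $D(b^2) = 2c^2 d^2$ is already free of $b$, which is why $b$ does not appear on the right-hand side for $n \geq 1$.
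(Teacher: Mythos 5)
Your proof is correct and follows essentially the same route as the paper's: induct on $n$, differentiate each monomial $c^{2k+2}d^{2n-2k}$ with the product rule, and recognize the Eulerian recurrence $\Eulerian{n+1}{k}=(k+1)\Eulerian{n}{k}+(n-k+1)\Eulerian{n}{k-1}$ in the collected coefficients. The only cosmetic difference is that the paper states the induction hypothesis with undetermined coefficients $F(n,k)$ (and with the exponents of $c$ and $d$ swapped, which is harmless by the symmetry $\Eulerian{n}{k}=\Eulerian{n}{n-1-k}$), whereas you substitute the Eulerian numbers directly and verify the recurrence.
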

\begin{proof}
Note that $D(b^2)=2c^2d^2$. Hence $D^n(b^2)=2D^{n-1}(c^2d^2)$ for $n\geq 1$.
Note that $D(c^2d^2)=2(c^2d^4+c^4d^2)$.
Assume that
$$D^n(b^2)=2^n\sum_{k=0}^{n-1}F(n,k)c^{2n-2k}d^{2k+2}.$$
Since
\begin{eqnarray*}
  D(D^n(b^2)) &=& 2^{n+1}\sum_{k=0}^{n-1}(n-k)F(n,k)c^{2n-2k}d^{2k+4}+2^{n+1}\sum_{k=0}^{n-1}(k+1)F(n,k)c^{2n-k+2}d^{2k+2},
\end{eqnarray*}
there follows
$$F(n+1,k)=(k+1)F(n,k)+(n-k+1)F(n,k-1).$$
We see that
the coefficients $F(n,k)$ satisfy the same recurrence relation and initial conditions as $\Eulerian{n}{k}$, so they agree.
\end{proof}

\noindent{\bf Proof of Theorem~\ref{thm-Pnxyq}:}\\
By Lemma~\ref{lemma01} and Lemma~\ref{lemma02}, we get
\begin{align*}
D^{n+1}(a)=&qD^n(ab^2)\\
          =&q\sum_{k=0}^n\binom{n}{k}D^k(a)D^{n-k}(b^2)\\
          =&qb^2D^n(a)+q\sum_{k=0}^{n-1}\binom{n}{k}D^k(a)2^{n-k}d^{2n-2k}c^2A_{n-k}\left(\frac{c^2}{d^2}\right).
\end{align*}
Taking $c^2=x,b^2=y$ and $d^2=1$ in both sides of the above identity, we immediately get~\eqref{Pxyq-Ax}.
Set $S_n(i,j,k)=\#\mqn^2(i,j,k)$. The following recurrence relation follows easily from the proof of Lemma~\ref{lemma01}:
$$S_{n+1}(i,j,k)=S_n(i-1,j-1,k)+2(j+1)S_n(i,j+1,k-1)+2kS_n(i,j,k)+2(n-j-k+1)S_n(i,j,k-1).$$
Multiplying both sides of the last recurrence relation by $q^iy^jx^k$ and summing for all $i,j,k$, we immediately get~\eqref{Pn-recurrence}.
Note that
\begin{align*}
P_{n}(x,y,q)&=\sum_{i=0}^n\binom{n}{i}(yq-q)^i\sum_{\sigma\in\mqn^2}x^{\caplat(\sigma)}q^{\cyc(\pi)}\\
            &=\sum_{i=0}^n\binom{n}{i}(yq-q)^iQ_{n-i}(x,q).
\end{align*}
Thus
$P(x,y,q;z)=e^{qz(y-1)}Q(x,q;z)$.
This completes the proof of Theorem~\ref{thm-Pnxyq}.

Given $\sigma\in\mqn^2$.
We say that $\sigma$ is a {\it Stirling derangement} if $\sigma$ has no fixed points.
Let $\mdqn$ be the set of Stirling derangements of $\mqn^2$.
Let $R_{n,k}(x,q)$ be the coefficient of $y^k$ in $P_n(x,y,q)$. Note that
$R_{n,0}$ is the corresponding enumerative polynomials on $\mdqn$. Set $R_n(x,q)=R_{n,0}(x,q)$.
Note that
\begin{align*}
R_{n,k}(x,q)&=\sum_{\substack{\sigma\in\mqn^2 \\ \fix(\sigma)=k}}x^{\caplat(\sigma)}q^{\cyc(\pi)}\\
         &=\binom{n}{k}q^k\sum_{\sigma\in \mdq_{n-k}}x^{\caplat(\sigma)}q^{\cyc(\pi)}\\
         &=\binom{n}{k}q^kR_{n-k}(x,q).
\end{align*}
Comparing the coefficients of both sides of~\eqref{Pn-recurrence}, we get the following result.
\begin{theorem}
For $n\geq 1$, the polynomials $R_n(x,q)$ satisfy the recurrence relation
\begin{equation}\label{Dnxq}
R_{n+1}(x,q)=2nxR_n(x,q)+2x(1-x)\frac{\partial}{\partial x}R_n(x,q)+2nxqR_{n-1}(x,q),
\end{equation}
with the initial conditions $R_1(x,q)=0,R_2(x,q)=2qx,R_3(x,q)=4qx(1+x)$.
\end{theorem}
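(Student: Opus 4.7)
The plan is to derive the recurrence directly from equation~\eqref{Pn-recurrence} by extracting the coefficient of $y^0$, using the factorization for $R_{n,k}(x,q)$ that has just been established.

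First I would write the relation
\[
P_n(x,y,q)=\sum_{k=0}^{n}\binom{n}{k}q^ky^kR_{n-k}(x,q),
\]
which follows immediately from $R_{n,k}(x,q)=\binom{n}{k}q^kR_{n-k}(x,q)$ and $P_n(x,y,q)=\sum_k R_{n,k}(x,q)y^k$. From this expansion one reads off the three quantities that will appear after setting $y=0$:
\[
P_n(x,0,q)=R_n(x,q),\qquad \frac{\partial}{\partial x}P_n(x,0,q)=\frac{\partial}{\partial x}R_n(x,q),\qquad \left.\frac{\partial}{\partial y}P_n(x,y,q)\right|_{y=0}=nq\,R_{n-1}(x,q).
\]

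Next I would specialize \eqref{Pn-recurrence} at $y=0$. The term $qyP_n(x,y,q)$ vanishes, the term $2x(1-y)\partial_yP_n$ becomes $2x\cdot nq R_{n-1}(x,q)$, and the remaining two terms give $2nx R_n(x,q)$ and $2x(1-x)\partial_x R_n(x,q)$. Collecting everything yields exactly
\[
R_{n+1}(x,q)=2nxR_n(x,q)+2x(1-x)\frac{\partial}{\partial x}R_n(x,q)+2nxq\,R_{n-1}(x,q),
\]
which is \eqref{Dnxq}.

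Finally I would justify the initial conditions by direct inspection of $\mdq_n$ for small $n$: the unique element of $\mq_1^2$ is $(11)$, which is all fixed points, so $R_1(x,q)=0$; the Stirling derangements in $\mq_2^2$ are $(1122)$ and $(1221)$, each contributing $xq$, giving $R_2(x,q)=2qx$; and a short enumeration of the Stirling derangements in $\mq_3^2$ (those elements of the list of $\mq_3^2$ with no cycle of the form $(ii)$) produces four permutations of weight $xq$ and four of weight $x^2q$, giving $R_3(x,q)=4qx(1+x)$.

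There is no real obstacle here: the only thing that needs a bit of care is the bookkeeping when differentiating the sum $\sum_k \binom{n}{k}q^ky^kR_{n-k}(x,q)$ with respect to $y$ and then setting $y=0$, which kills every term except $k=1$ and produces the factor $nq R_{n-1}(x,q)$ that becomes the last summand of the recurrence.
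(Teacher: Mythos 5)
Your proof is correct and follows essentially the same route as the paper, which obtains \eqref{Dnxq} by comparing coefficients of $y$ in \eqref{Pn-recurrence}; your evaluation at $y=0$ together with $\left.\partial_y P_n\right|_{y=0}=nq\,R_{n-1}(x,q)$ (from $R_{n,1}=nq\,R_{n-1}$) is exactly that coefficient extraction, and your verification of the initial conditions by listing the Stirling derangements of orders $1$, $2$, $3$ checks out.
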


Let $q_n=\#\mdqn$.
Then the following corollary is immediate.
\begin{cor}
For $n\geq 1$,
the numbers $q_n$ satisfy the recurrence relation
$q_{n+1}=2n(q_{n}+q_{n-1})$,
with the initial conditions $q_0=1,q_1=0$ and $q_2=2$.
\end{cor}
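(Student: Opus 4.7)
The corollary is a direct specialization of the preceding theorem, so my plan is to deduce the recurrence for $q_n$ by setting $x=1$ and then $q=1$ in equation~\eqref{Dnxq}. First I observe that, by the definition of $R_n(x,q)=\sum_{\sigma\in\mdqn} x^{\caplat(\sigma)}q^{\cyc(\sigma)}$, evaluating all weights at $1$ simply counts elements of $\mdqn$; thus $q_n=R_n(1,1)$ for every $n\geq 0$ (with the convention that the empty permutation is the unique element of $\mq_0^2$ and is vacuously a derangement, giving $R_0(x,q)=1$).

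Next I would substitute $x=1$ directly into~\eqref{Dnxq}. The factor $2x(1-x)$ in the middle term vanishes, eliminating the partial-derivative contribution, and the result collapses to
\begin{equation*}
R_{n+1}(1,q)=2n\,R_n(1,q)+2nq\,R_{n-1}(1,q).
\end{equation*}
Specializing further to $q=1$ turns this into $q_{n+1}=2n(q_n+q_{n-1})$, which is precisely the claimed recurrence.

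Finally, I would verify the initial conditions. From $R_1(x,q)=0$ we read off $q_1=0$, and from $R_2(x,q)=2qx$ we read off $q_2=2$. The value $q_0=1$ is the base case noted above. There is no genuine obstacle here: all substantive combinatorial work was absorbed into the proof of the recurrence for $R_n(x,q)$ via the grammatical calculus of Lemmas~\ref{lemma01} and~\ref{lemma02}, and the corollary only requires tracking which terms survive the specialization $x=1$.
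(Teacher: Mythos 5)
Your proposal is correct and matches the paper's (implicit) argument exactly: the corollary is obtained by specializing $x=1$ and $q=1$ in the recurrence~\eqref{Dnxq}, which kills the derivative term and yields $q_{n+1}=2n(q_n+q_{n-1})$, with the initial values read off from $R_0$, $R_1$, $R_2$. Nothing further is needed.
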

Note that $\#\mqn^2=\mqn=(2n-1)!!$. Then
$$\sum_{n\geq 0}\#\mqn^2\frac{z^n}{n!}=\frac{1}{\sqrt{1-2z}}.$$
Thus
$$\sum_{n\geq 0}q_n\frac{z^n}{n!}=\frac{e^{-z}}{\sqrt{1-2z}},$$
which can be easily proved by using the {\it exponential formula} (see~\cite[Theorem 3.50]{Bona12}).
It should be noted that $q_{n+1}$ is also the number of minimal number of 1-factors in a $2n$-connected graph having at least one 1-factor (see~\cite{Bollobas78}). It would be interesting to study the relationship between Stirling permutations of the second kind and $2n$-connected graphs.

In recent years, there has been much work on derangements polynomials of Coxeter groups (see~\cite{Chen09,Chow09,Kim01,Lin15,Zhang95} for instance).
For each $\pi\in\msn$,
an index $i$ is called {\it excedance} (resp. {\it anti-excedance}) if $\pi(i)>i$ (resp. $\pi(i)<i$).
Let $\exc(\pi)$ be the number of excedances of $\pi$.
A permutation $\pi\in\msn$ is a {\it derangement} if $\pi(i)\neq i$ for any $i\in [n]$.
Let $\mdn_n$ denote the set of derangements of $\msn$.
The {\it derangements polynomial} is defined by
$$d_n(x)=\sum_{\pi\in\mdn_n}x^{\exc(\pi)}.$$
Brenti~\cite[Proposition 5]{Bre90} derived that
\begin{equation}\label{dxz}
d(x,z)=\sum_{n\geq 0}d_n(x)\frac{z^n}{n!}=\frac{1-x}{e^{xz}-xe^z}.
\end{equation}

The {\it Stirling derangement polynomial} is defined by
$$R_n(x)= \sum_{\sigma\in\mdq_{n}}x^{\caplat(\sigma)}.$$
Let $$S(x,z)=\sum_{n\geq 0}R_n(x)\frac{z^n}{n!}.$$
Taking $y=0$ and $q=1$ in~\eqref{Pxyqz-explicit}, we have
\begin{equation}\label{Rxz}
S(x,z)=\sqrt{\frac{x-1}{xe^{2z}-e^{2xz}}}.
\end{equation}
Thus
$$S^2(x,z)=d(x,2z),$$
which is a dual result of~\eqref{N2xzAx2z}. Equivalently,
$$2^nd_n(x)=\sum_{k=0}^n\binom{n}{k}R_k(x)R_{n-k}(x).$$
Moreover, combining~\eqref{Dnxq},~\eqref{Rxz} and~\cite[Corollary 2.4]{Liu07}, we immediately get the following result.
\begin{prop}
For $n\geq 2$, the polynomial $R_n(x)$ is symmetric and has only simple real zeros.
\end{prop}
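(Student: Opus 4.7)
For symmetry, my plan is to define $\tilde R_n(x):=x^n R_n(1/x)$ and show by induction on $n$ that $\tilde R_n=R_n$ for every $n\ge 2$. The base cases $R_2(x)=2x$ and $R_3(x)=4x+4x^2$ are patently palindromic. For the inductive step, I would substitute $x\mapsto 1/x$ in \eqref{Dnxq} (with $q=1$), multiply through by $x^{n+1}$, and use the chain-rule identity
$$x^{n-1}R_n'(1/x)=n\,\tilde R_n(x)-x\,\tilde R_n'(x),$$
obtained by differentiating $\tilde R_n(x)=x^n R_n(1/x)$. A short simplification then reveals that $\tilde R_{n+1}$ satisfies the same recurrence $\tilde R_{n+1}=2nx\tilde R_n+2x(1-x)\tilde R_n'+2nx\tilde R_{n-1}$ as $R_{n+1}$, with matching initial data, closing the induction.

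For real-rootedness with simple zeros, I would invoke \cite[Corollary 2.4]{Liu07}, a general interlacing criterion for polynomial sequences defined by three-term differential recurrences of exactly the shape of \eqref{Dnxq}. The hypotheses to verify are: the initial polynomials $R_2$ and $R_3$ have simple real zeros that interlace in the required sense; the derivative coefficient $2x(1-x)$ has the appropriate sign on the interval of interest (the negative real axis, where the zeros of $R_n$ must lie by the palindromic property); and the linear coefficient $2nx$ in front of $R_{n-1}$ is sign-controlled at the relevant points. Once these conditions are verified, the corollary delivers simple real zeros of $R_n$ for every $n\ge 2$, together with interlacing of consecutive $R_{n-1}$ and $R_n$.

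The main obstacle is the interlacing hypothesis required by \cite[Corollary 2.4]{Liu07}: because \eqref{Dnxq} is not a pure two-term differential recurrence but also involves the lower member $R_{n-1}$, the argument is more delicate than in the standard Eulerian-type setting. Fortunately, the symmetry established in the first step forces the non-zero zeros of $R_n$ to appear in reciprocal pairs along the negative real axis, which rigidly constrains their locations. This reciprocal structure, combined with the fact that $\deg R_{n+1}=\deg R_n+1$, should make propagation of strict interlacing from the palindromic base cases routine once the sign-checks have been tabulated, and simplicity of zeros then follows from strict interlacing in the usual way.
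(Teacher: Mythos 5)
Your proposal is correct, and on the real-rootedness half it coincides exactly with the paper's argument: the paper's entire proof is to combine the three-term differential recurrence \eqref{Dnxq} (at $q=1$, with $R_2(x)=2x$, $R_3(x)=4x(1+x)$) with \cite[Corollary 2.4]{Liu07}, which is precisely what you propose. Where you diverge is on symmetry: the paper reads it off from the closed form \eqref{Rxz}, which is invariant under the substitution $(x,z)\mapsto(1/x,xz)$, whereas you propagate the identity $x^nR_n(1/x)=R_n(x)$ inductively through \eqref{Dnxq}. Your computation does close: writing $\tilde R_n(x)=x^nR_n(1/x)$, the substitution $x\mapsto 1/x$ followed by multiplication by $x^{n+1}$ and your identity $x^{n-1}R_n'(1/x)=n\tilde R_n(x)-x\tilde R_n'(x)$ converts $2nx^nR_n(1/x)+2(x^n-x^{n-1})R_n'(1/x)$ into $2nx\tilde R_n(x)+2x(1-x)\tilde R_n'(x)$, and the tail term becomes $2nx^nR_{n-1}(1/x)=2nx\,\tilde R_{n-1}(x)$ with the correct normalization $x^{n-1}$ for $R_{n-1}$, so $\tilde R_{n+1}$ satisfies the same recurrence and the two consecutive palindromic base cases close the induction. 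Your route is more elementary (no generating function needed) at the cost of some algebra; the paper's is a one-line check on \eqref{Rxz}. One small correction to your wording: palindromicity alone does not force the zeros onto the negative real axis --- for that you also need the nonnegativity of the coefficients of $R_n$ (automatic, as it is a generating polynomial) together with the real-rootedness being established at the previous stage of the induction; this is the standard bookkeeping in the Liu--Wang framework and does not affect the validity of your plan.
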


Let $\imath^2=\sqrt{-1}$.
From~\eqref{Rxz}, putting $x=-1$, we deduce the expression
\begin{equation}
S(-1,z)=\sqrt{\frac{2}{e^{2z}+e^{-2z}}}=\sqrt{\sec(2\imath z)}.
\end{equation}
Note that $\sec(z)$ is an even function.
Therefore, for $n\geq 1$, we have
$$ \sum_{\sigma\in\mdq_{n}}(-1)^{\caplat(\sigma)}=\left\{
              \begin{array}{ll}
                0, & \hbox{if $n=2k-1$;} \\
                (-1)^kh_k, & \hbox{if $n=2k$,}
              \end{array}
            \right.
$$
where the number $h_n$ is defined by the following series expansion:
$$\sqrt{\sec(2\imath z)}=\sum_{n\geq0}(-1)^nh_n\frac{z^{2n}}{(2n)!}.$$

The first few of the numbers $h_n$ are $h_0=1,h_1=2,h_2=28,h_3=1112,h_4=87568$.
It should be noted that the numbers $h_n$ also count permutations of $\msss_{4n}$ having the following properties:
\begin{enumerate}
  \item [(a)] The permutation can be written as a product of disjoint cycles with only two elements;
  \item [(b)] For $i\in [2n]$, indices $2i-1$ and $2i$ are either both excedances or both anti-excedances.
\end{enumerate}
For example, when $n=1$, there are only two permutations having the desired properties: $(1,3)(2,4)$ and $(14)(23)$.
This kind of permutations was introduced by Sukumar and Hodges~\cite{Sukumar07}.
\section{Concluding remarks}\label{Section-5}
A natural generalization of Stirling permutations is $k$-Stirling permutations.
Let $j^i$ denote the $i$ copies of $j$, where $i,j\geq 1$.
We call a permutation of the multiset
$\{1^k,2^k,\ldots,n^k\}$ a $k$-{\it Stirling permutation} of order $n$ if for each $i$, $1\leq i\leq n$,
all entries between the two occurrences of $i$ are at least $i$.
One can introduce $k$-Stirling permutations of the second kind along the same line as in Definition~\ref{def07}.

\end{document}